\newtheorem{theorem}{Theorem}[section]
\newtheorem{lemma}{Lemma}[section]
\newtheorem{proposition}{Proposition}[section]
\newtheorem{claim}{Claim}[section]
\begin{document}
\textwidth 150mm \textheight 225mm
\title{Linear Tur\'{a}n numbers of acyclic quadruple systems\thanks{Supported by the National Natural Science Foundation of China (No. 12271439) and China Scholarship Council (No. 202206290003).}}
\author{{Lin-Peng Zhang\textsuperscript{a,b,c}, Ligong Wang\textsuperscript{a,b,c,}\footnote{Corresponding author.}}\\
{\small \textsuperscript{a} School of Mathematics and Statistics}\\
{\small Northwestern Polytechnical University, Xi'an, Shaanxi 710129, P.R. China.}\\
{\small \textsuperscript{b} Xi'an-Budapest Joint Research Center for Combinatorics}\\
{\small  Northwestern Polytechnical University, Xi'an, Shaanxi 710129, P.R. China.}\\
{\small \textsuperscript{c} International Joint Research Center on Operations Research, Optimization and Artificial Intelligence}\\
{\small  Xi'an, Shaanxi 710129, P.R. China.}\\
{\small E-mail: lpzhangmath@163.com, lgwangmath@163.com}}
\date{}
\maketitle
\begin{center}
\begin{minipage}{135mm}
\vskip 0.3cm
\begin{center}
{\small {\bf Abstract}}
\end{center}
{\small A hypergraph $H$ is called \emph{linear} if every pair of vertices in $H$ is contained in at most one edge.
Given a family $\mathcal{F}$ of $r$-uniform hypergraphs, an $r$-uniform hypergraph $H$ is called \emph{$\mathcal{F}$-free} if $H$ does not contain
any member of $\mathcal{F}$ as a subhypergraph. The \emph{linear Tur\'{a}n number} $ex^{lin}_{r}
(n,\mathcal{F})$ of $\mathcal{F}$ is the maximum number of edges in an $\mathcal{F}$-free
linear $r$-uniform hypergraphs on $n$ vertices. A linear $r$-uniform hypergraph is called \emph{acyclic} if it can be constructed starting from one single edge
then at each step adding a new edge that intersect the union of the vertices of the previous edges in
at most one vertex. Recently, Gy\'{a}rf\'{a}s, Ruszink\'{o} and S\'{a}rk\"{o}zy [\emph{Linear Tur\'{a}n numbers of
acyclic triple systems, European J. Combin. 99 (2022) 103435.}] initiated the study of the linear Tur\'{a}n numbers of acyclic linear triple systems.
In this paper, we extend their results to linear quadruple systems. Among acyclic linear quadruple systems, we concentrate on small trees, paths and
matchings. For the case of small trees, we find
that for a linear tree $T$, $ex^{lin}_{4}(n,T)$ relates to difficult problems on Steiner system $S(2,4,n)$.
For example, we show that $ex^{lin}_{4}(n,P_4)\le \frac{5n}{4}$ with equality holds if and only if the linear quadruple system is the disjoint union of $S(2,4,16)$. Denote by $E^{+}_4$ the linear tree consisting of three pairwise
disjoint quadruples and a fourth one intersecting all of them. We prove that $12\lfloor\frac{n-4}{9}\rfloor\le ex^{lin}_{4}(n,E^{+}_4)\le \frac{14(n-s)}{9}$, where $s$ is the number of vertices in $G$ with degree at least 8.
Denote by $M_k$ and $P_k$ the set of $k$ pairwise disjoint quadruples and the linear path with $k$ quadruples, respectively.
For the case of paths, we show that $ex^{lin}_{4}(n,P_k)\le 2.5kn$. For the case of matchings, we prove that for fixed $k$ and sufficiently large $n$,
$ex^{lin}_{4}(n,M_k)=g(n,k)$ where $g(n,k)$ denotes the maximum number of quadruples that can intersect $k-1$ vertices in a linear quadruple system on $n$
vertices.
 \vskip 0.1in \noindent {\bf Key Words}: \ linear Tur\'{a}n number; linear quadruple system; acyclic; Steiner system $S(2,4,n)$ \vskip
0.1in \noindent {\bf AMS Subject Classification (2020)}: \ 05C65, 05C35, 05C05}
\end{minipage}
\end{center}

\section{Introduction}
We use stadard notation and terminology. A hypergraph $H=(V(H),E(H))$ consists of a set $V(H)$ of
vertices and a set $E(H)$ of edges, where each edge is
a subset of $V(H)$. In particular, if each edge in a hypergraph $H$ is an $r$-element subset of $V(H)
$, then $H$ is an $r$-uniform hypergraph (or $r$-graph for short). When $r=2$, it reduce to a simple
graph. Generally, we call a $3$-graph the triple system where each edge is a triple and a $4$-graph the quadruple system where each edge is a quadruple. For any $v\in V(H)$, the \emph{degree} $d(v)$ of
the vertex $v$ is the number of edges containing $v$. For an edge $e\in E(H)$ and a vertex subset
$S\subseteq V(H)$, $e$ and $S$ are \emph{incident} if the edge $e$ contain at least one vertex of
$S$. For any two edges $e,f\in E(H)$, $e$ and $f$ are \emph{intersecting} (or
say $e$ \emph{intersects} $f$) if $|e\cap f|\ge 1$. For positive integers $k$ and $a\le b$, we use $[k]$ and $[a, b]$ to denote the integer set from 1 to $k$ and the integer set from $a$ to $b$, respectively.

For a hypergraph $H$ and a family $\mathcal{F}$ of hypergraphs, $H$ is called \emph{$\mathcal{F}$-free} if $H$ does not contain any member of $\mathcal{F}$ as a subhypergraph.
The \emph{Tur\'{a}n number} $ex_r(n,\mathcal{F})$ of $\mathcal{F}$ is the maximum number of edges in an $\mathcal{F}$-free $r$-graphs on $n$ vertices. The Tur\'{a}n numbers of hypergraphs have
been studied extensively, we refer the reader to the surveys \cite{Fu,FS,Kee,MV} and the book \cite{GP}. Recently, the study on the Tur\'{a}n numbers of hypergraphs have been extended to the linear
hypergraphs. A hypergraph $H$ is \emph{linear} if every pair of vertices in $H$ is contained in at most one edge. Given a family $\mathcal{F}$ of $r$-graphs, the \emph{linear Tur\'{a}n number} $ex^{lin}_{r}
(n,\mathcal{F})$ of $\mathcal{F}$ is the maximum number of edges in an $\mathcal{F}$-free linear $r$-graphs on $n$ vertices. When $\mathcal{F}=\{F\}$, instead of $ex^{lin}_{r}(n,\{F\})$ we write
$ex^{lin}_{r}(n,F)$. This notion was firstly been proposed by Collier-Cartaino, Graber and Jiang \cite{CC} in 2018. However, the study on linear Tur\'{a}n number can be traced back to the famous $(6,3)$-problem which studied by Brown, Erd\H{o}s and S\'{o}s \cite{BES} in 1973.
The \emph{$(6,3)$-problem} says that what is the maximum number of edges of triple systems not carrying three edges
on six vertices. A Berge cycle Berge-$C_k$ of length $k$ in a hypergraph $H$ is an alternating sequence $v_1e_1v_2e_2\cdots v_ke_k$, where
$v_i\in V(H), e_i\in E(H)$ for $i\in [k]$, $v_i, v_{i+1}\in e_i$ for $i\in [k-1]$ and $v_k, v_1\in e_k$.
 In 1976, by applying the regularity lemma \cite{SZ}, Ruzsa and Szemer\'{e}di \cite{RS}
proved the \emph{triangle removal lemma} which can be phrased as
$$n^{2-\frac{c}{\sqrt{\log{n}}}}\le ex^{lin}_{3}(n,\mbox{Berge-}C_3)=o(n^2)$$
where $c>0$ is a constant.
After that, the linear Tur\'{a}n number of Berge-$C_k$ has been studied extensively, see \cite{EGM, FG, GC, GMV, GS, LV, CT}.

A linear $r$-graph is called \emph{acyclic} if it can be constructed starting from one single edge then at each step adding a new edge that intersect the union of the vertices of the previous edges in
at most one vertex. An acyclic linear $r$-graph is a \emph{linear $r$-tree} if at each step the new edge we add intersect the union of the vertices of the previous edges in exactly one vertex. In particular, a \emph{linear $r$-star} is a linear $r$-tree whose edges are all intersecting in the same one vertex.
A \emph{linear $r$-path} is a linear $r$-tree whose edges are all consecutive.
Denote by $T^{(r)}_k$, $S^{(r)}_k$ and $P^{(r)}_k$ the linear $r$-tree with $k$ edges, the linear $r$-star with $k$ edges and the linear $r$-path with $k$ edges, respectively.
Note that $|V(T^{(r)}_k)|=(r-1)k+1$. For the disconnected case, an \emph{$r$-matching} is a set of pairwise disjoint edges in an $r$-graph.
Denote by $M^{(r)}_{k}$ an $r$-matching with $k$ edges.
Recently, Gy\'{a}rf\'{a}s, Ruszink\'{o} and S\'{a}rk\"{o}zy \cite{GRS} initiated the study of
$ex^{lin}_{3}(n,F)$ for $F\in \{T^{(3)}_k, P^{(3)}_3, B_4, P^{(3)}_4, E_4, P^{(3)}_k, M_{k}^{(3)}\}$, where $E(B_4)=\{\{1,2,3\},\{3,4,5\},\{3,6,7\},\{7,8,9\}\}$, $E(E_4)=\{\{1,2,3\},\{4,5,6\},\{7,8,9\},\{1,4,7\}\}$ and \linebreak $V(B_4)=V(E_4)=[9]$.

Later, Carbonero, Fletchcher, Guo, Gy\'{a}rf\'{a}s, Wang and Yan \cite{CFGGWY} conjectured that $ex^{lin}_{3}(n,\linebreak E_4)\sim \frac{3n}{2}$. Furthermore, Fletchcher \cite{WF} proved that $ex^{lin}_{3}(n,E_4)< \frac{5n}{3}$.
Recently, Tang, Wu, Zhang and Zheng \cite{TWZZ} proved that if $H$ is an $E_4$-free 3-graph on $n$ vertices,
then $|E(H)|\le \frac{3(n-s)}{2}$ where $s$ is the number of vertices in $H$ with degree at least 6.

In this paper, we consider the acyclic linear quadruple systems. Among acyclic linear quadruple systems,
we concentrate on general trees, paths, small trees and matchings. For convenience,
we use $T_k$, $S_k$, $P_k$ and $M_k$ to denote the linear 4-tree with $k$ edges, the linear 4-star with $k$ edges,
the linear 4-path with $k$ edges and the 4-matching with $k$ edges, respectively. In the next three subsections,
we give the results for general trees, paths, small trees and matchings respectively.

\subsection{Results for general linear 4-trees}
\quad We can obtain an upper bound of $ex^{lin}_{4}(n,T_k)$ by analyzing the characterization of linear 4-trees.
\begin{proposition}\label{prop1}
Let $T_k$ be a linear 4-tree with $k>1$ quadruples. Then $ex^{lin}_{4}(n,T_k)\le (3k-5)n$.
\end{proposition}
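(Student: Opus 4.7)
The plan is to prove the bound in two stages. First, I will establish a minimum-degree embedding lemma: every linear 4-graph with minimum degree at least $3k-4$ contains $T_k$. Given the lemma, a standard iterative-deletion argument yields the proposition: if a $T_k$-free linear 4-graph $H$ had more than $(3k-5)n$ edges, repeatedly deleting vertices of degree at most $3k-5$ (each removing at most $3k-5$ edges) could not exhaust all $n$ vertices without discarding more than $|E(H)|$ edges, so the process terminates at a non-empty subhypergraph of minimum degree $\ge 3k-4$, which by the lemma contains $T_k$ — a contradiction.

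For the embedding lemma, list the edges of $T_k$ in their acyclic construction order $e_1,\dots,e_k$, so that each $e_i$ with $i\ge 2$ meets $e_1\cup\cdots\cup e_{i-1}$ at a single attachment vertex $a_i$. I will embed $e_1$ as any edge of $H$, and then embed the remaining edges greedily in order, maintaining the set $U$ of used images (with $|U|=3i-2$ before placing $e_i$). Writing $u_i$ for the image of $a_i$, at step $i$ one must find an edge of $H$ through $u_i$ whose three other vertices avoid $U\setminus\{u_i\}$. By linearity, each $w\in U\setminus\{u_i\}$ lies in at most one edge of $H$ together with $u_i$, so the ``incidence budget'' $\sum_{e\ni u_i}|e\cap(U\setminus\{u_i\})|$ is at most $3i-3$.

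The crucial observation is the following. Set $d:=d_{T_{i-1}}(a_i)\ge 1$. The $d$ already-embedded edges through $u_i$ are themselves bad, and each absorbs three units of the incidence budget (since its other three vertices all lie in $U$) while counting as only one bad edge. Hence the total number of bad edges through $u_i$ is at most $d+(3i-3-3d)=3i-3-2d\le 3i-5$, which is strictly less than the available degree $3k-4\ge 3i-4$, so a usable edge always exists and the greedy step succeeds.

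The main obstacle is precisely this counting refinement. The naive estimate, in which every bad edge absorbs only a single unit of the budget, forces minimum degree $\ge 3k-2$ and yields only the weaker bound $(3k-3)n$. The two-vertex saving down to $(3k-5)n$ relies on the triply-efficient blocking by already-embedded edges, combined with the fact that $d\ge 1$ at every step — which is guaranteed by the very definition of the attachment vertex $a_i$ in a linear 4-tree.
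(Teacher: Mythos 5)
Your proof is correct and follows essentially the same route as the paper's: reduce to a subhypergraph of minimum degree at least $3k-4$ by iteratively deleting low-degree vertices, then embed $T_k$ greedily in construction order. The only difference is that you supply the linearity-based incidence-budget count (with already-embedded edges through $u_i$ each absorbing three units) that justifies why degree $3k-4$ suffices, a detail the paper leaves implicit behind the phrase ``by the greedy algorithm.''
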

\begin{proof}
Let $H$ be a $T_k$-free linear quadruple system on $n$ vertices with more than $(3k-5)n$ quadruples. We may assume that $H$ is a minimal counterexample.
Then we have that each vertex of $H$ has degree at least $3k-4$, otherwise we can find a smaller counterexample by deleting a vertex with a smaller degree.
Then by the greedy algorithm we can construct a linear 4-tree $T_k$, adding one quadruple at each step such that the new quadruple intersect
the union of vertices of the previous edges in the required vertex.
\end{proof}

We note that lower bounds for linear Tur\'{a}n numbers of linear 4-trees relate to Steiner systems $S(2,4,n)$.
A \emph{Steiner system} $S(t,k,n)$ is a pair $(V,\mathcal{B})$ where $V$ is an $n$-element
vertex set and $\mathcal{B}$ is a family of $k$-element subsets of $V$ called \emph{blocks}
such that each $t$-element subset of $V$ is contained in exactly one block.
Steiner system with $t=2$ and $k=3$ is called \emph{Steiner triple system}, denoted by $STS(n)$.
For more details of Steiner triple systems see \cite{CR}. Steiner system with $t=3$ and $k=4$ is
called \emph{Steiner quadruple system}, denoted by $SQS(n)$. Here, we consider the Steiner system $S(2,4,n)$ on
$n$ vertices. Hanani \cite{Ha} proved that a Steiner system $S(2,4,n)$ exists if and only if $n\equiv 1,4 \pmod{12}$.
Thus, configurations $S(2,4,13)$ and $S(2,4,16)$ exist (we depict these two configurations in the Appendix of this paper).
For more details of Steiner systems $S(2,4,n)$ see survey \cite{RR}.

We can obtain a natural lower bound for $ex^{lin}_{4}(n,T_k)$ when we can use Steiner systems $S(2,4,3k-2)$ as components and $n$ is divisible by $3k-2$.
\begin{proposition}\label{prop2}
If $(3k-2)|n$ and $3k-2\equiv 1,4 \pmod{12}$, then $ex^{lin}_{4}(n,T_k)\ge \frac{n(k-1)}{4}$. This is sharp when $T_k$ is the linear 4-star $S_k$.
\end{proposition}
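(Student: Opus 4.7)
The plan is to exhibit an explicit $T_k$-free linear quadruple system achieving the claimed bound, and then verify sharpness for the linear star $S_k$ via a routine degree argument.

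For the lower bound, since $3k-2\equiv 1,4\pmod{12}$, by Hanani's theorem a Steiner system $S(2,4,3k-2)$ exists. Because $(3k-2)\mid n$, I take the vertex set of $H$ to be the disjoint union of $n/(3k-2)$ copies of the vertex set of $S(2,4,3k-2)$, and let $E(H)$ be the union of the corresponding blocks. The hypergraph $H$ is linear since each component is, and distinct components share no vertices. Counting edges: a single $S(2,4,v)$ contains $\binom{v}{2}/\binom{4}{2}=v(v-1)/12$ blocks; with $v=3k-2$ this gives $(3k-2)(k-1)/4$ blocks per component, so
\[
|E(H)|=\frac{n}{3k-2}\cdot\frac{(3k-2)(k-1)}{4}=\frac{n(k-1)}{4}.
\]

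Next I would verify that $H$ contains no copy of $T_k$. Since $T_k$ is a linear $4$-tree with $k$ edges, its vertex set has size $(4-1)k+1=3k+1$, and $T_k$ is connected as a hypergraph. Every connected subhypergraph of $H$ is contained in a single $S(2,4,3k-2)$ component, which has only $3k-2<3k+1$ vertices. Hence no copy of $T_k$ can be embedded, and $H$ is $T_k$-free, proving $ex^{lin}_{4}(n,T_k)\ge \frac{n(k-1)}{4}$.

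For the sharpness statement when $T_k=S_k$, I would invoke the observation that an $S_k$-free linear quadruple system has maximum degree at most $k-1$: if some vertex $v$ lies in $k$ quadruples $e_1,\dots,e_k$, then by linearity these quadruples pairwise intersect only in $v$, forming a copy of $S_k$. Therefore, for any $S_k$-free linear quadruple system $H'$ on $n$ vertices,
\[
4|E(H')|=\sum_{v\in V(H')}d(v)\le (k-1)n,
\]
so $|E(H')|\le n(k-1)/4$, matching the construction.

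The only step requiring any care is ensuring that the pieces fit together cleanly, specifically that $T_k$'s connectedness blocks it from appearing across components; there is no serious technical obstacle, as the bound for $T_k$ follows directly from the existence of $S(2,4,3k-2)$, and the sharpness for the star is immediate from the linearity constraint.
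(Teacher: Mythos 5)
Your proof is correct and follows exactly the construction the paper has in mind (the paper states Proposition \ref{prop2} without a formal proof, but the preceding sentence describes precisely this disjoint union of Steiner systems $S(2,4,3k-2)$, with the edge count, the $|V(T_k)|=3k+1>3k-2$ connectivity obstruction, and the max-degree bound for $S_k$-freeness all as you give them). Nothing further is needed.
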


\subsection{Results for small linear 4-trees and linear 4-paths}
\quad For $k=2$, it is trivial that $ex^{lin}_{4}(n,P_2)=\lfloor\frac{n}{4}\rfloor$. For $k=3$, we have the
following result.
\begin{proposition}\label{prop3}
$ex^{lin}_{4}(n,P_3)\le n$ with equality if and only if the linear quadruple system is the union of disjoint Steiner systems $S(2,4,13)$.
\end{proposition}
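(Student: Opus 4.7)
The plan is to prove the upper bound via an ``all-or-nothing'' intersection principle forced by $P_3$-freeness, and then use it to recognize $S(2,4,13)$ in the equality case; the lower bound follows from well-known properties of $S(2,4,13)$.

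For the lower bound (when $13 \mid n$), I would observe that $S(2,4,13)$ has exactly $\binom{13}{2}/\binom{4}{2}=13$ blocks on $13$ points, so it is a symmetric $2$-design. By the standard fact that any two blocks of a symmetric $2$-$(v,k,1)$ design meet in exactly one point, $S(2,4,13)$ contains no two disjoint blocks and is therefore $P_3$-free. Taking disjoint copies then supplies a $P_3$-free linear quadruple system with $n$ blocks.

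For the upper bound, fix $u\in V(H)$ of degree $d=d(u)$, let $B_u=\{e_1,\ldots,e_d\}$, and write $S_u=\bigcup_i e_i$, so $|S_u|=3d+1$. The key observation is: if a block $f\notin B_u$ meets some $e_i$ at a vertex $v\ne u$, then $f$ must meet every other $e_j$, for otherwise $f,e_i,e_j$ forms a copy of $P_3$ through the vertices $v$ and $u$. Since $|f|=4$ and $f$ meets each $e_j$ in at most one vertex by linearity, $d\ge 5$ forces every external $f$ to avoid $S_u\setminus\{u\}$ entirely; applying the same reasoning to a hypothetical block $g\subseteq S_u$ with $g\notin B_u$ shows no such $g$ exists either. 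So each vertex of degree $\ge 5$ sits in a closed component with exactly $d$ blocks on $3d+1$ vertices. Letting $V_{\ge 5}=\{v:d(v)\ge 5\}$ and $R=V(H)\setminus\bigcup_{u\in V_{\ge 5}} S_u$, the restriction $H_R$ has maximum degree $\le 4$, so $|E(H_R)|\le|R|$ and therefore $|E(H)|\le\sum_{u\in V_{\ge 5}}d(u)+|R|\le\sum_{u\in V_{\ge 5}}(3d(u)+1)+|R|=n$.

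For equality, the inequalities collapse, forcing $V_{\ge 5}=\varnothing$ and every vertex of $H$ to have degree exactly $4$. Fix such a $u$; then $|S_u|=13$, and for an arbitrary $v\in e_1\setminus\{u\}$ the three blocks of $B_v$ distinct from $e_1$ are external to $B_u$ but meet $e_1$ at $v\ne u$, so the all-or-nothing principle forces each of them to meet every $e_j$ in a distinct non-$u$ vertex and hence to lie in $S_u$. Iterating, every block through a vertex of $S_u$ is contained in $S_u$, so by incidence counting $S_u$ supports $13\cdot 4/4=13$ blocks covering $13\cdot\binom{4}{2}=\binom{13}{2}$ pairs; linearity then forces each pair to be covered exactly once, so $H[S_u]\cong S(2,4,13)$. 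Peeling off this component and inducting on $n$ yields the claimed disjoint union. I expect the main subtlety to be executing the all-or-nothing lemma cleanly and applying it \emph{twice}---first to rule out external blocks crossing $S_u$, then to rule out extra blocks inside $S_u$---so that the high-degree closed components are verified to be genuinely closed.
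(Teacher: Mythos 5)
Your proof is correct, and its combinatorial heart is the same as the paper's: an edge that meets a star $S_d$ at a vertex other than the centre must meet every edge of the star (else those three quadruples form a $P_3$), which is impossible for a quadruple once $d\ge 5$. The packaging, however, is genuinely different. The paper runs a minimal-counterexample argument: minimality supplies connectivity and minimum degree at least $2$, which is what produces the edge through $q$ avoiding the centre $p$, and the degree-$5$ vertex then gives the contradiction. You instead promote the observation to a structural statement --- every vertex $u$ of degree $d\ge 5$ sits in a closed star component with $d$ blocks on $3d+1$ vertices --- and combine it with the trivial degree-sum bound on the remaining max-degree-$\le 4$ part, getting $|E(H)|\le n$ by direct counting with no extremality or minimality hypothesis. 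This buys a more transparent equality analysis: the collapse of your inequalities immediately forces $4$-regularity, the all-or-nothing principle closes each component on $13$ vertices, and the pair count $13\cdot\binom{4}{2}=\binom{13}{2}$ identifies $S(2,4,13)$; you also verify the lower bound (pairwise-intersecting blocks of the symmetric design), which the paper's proof leaves implicit. The one step you should write out in full is that the sets $S_u$ over $u\in V_{\ge 5}$ are pairwise disjoint and their block families $B_u$ are too, so that $\sum_u d(u)$ really counts $\bigl|\bigcup_u B_u\bigr|$; this follows because every non-centre vertex of such an $S_u$ has all its blocks inside $B_u$ and hence has degree $1$, but it is not stated in your sketch.
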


There are four non-isomorphic linear 4-trees $T_k$ with $k=4$. The case of linear 4-star $S_4$ is treated in Proposition \ref{prop2}.
There also are three linear 4-trees with four quadruples except the linear 4-star $S_4$.
Denote by $S^{+}_3$ the linear 4-tree obtained from $S_3$ by appending a quadruple at a vertex of degree one (see Figure \ref{fig2}, where
straight lines with 4 vertices indicate quadruples).
\begin{figure}[htbp]
\centering
\includegraphics[scale=0.50]{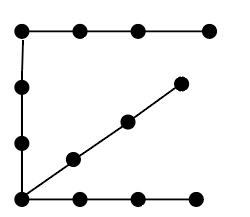}\\
\caption{Configuration $S^{+}_3$}\label{fig2}
\end{figure}
\begin{figure}[htbp]
\centering
\includegraphics[scale=0.37]{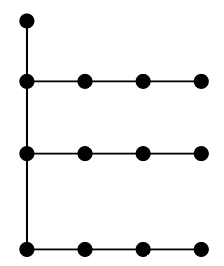}\\
\caption{Configuration $E^{+}_4$}\label{fig3}
\end{figure}

\begin{theorem}\label{th1.1}
Let $F\in \{S^{+}_3,P_4\}$. Then
$$ex^{lin}_{4}(n,F)\le \frac{5n}{4}.$$
Equality holds if and only if the linear quadruple system is the union of disjoint Steiner systems $S(2,4,16)$.
\end{theorem}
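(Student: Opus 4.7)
The plan is to establish a matching construction, prove the upper bound via a degree-pruning argument that exploits $F$-freeness, and deduce the equality case from the sharpness of the argument.

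For the lower bound, I would realize $S(2,4,16)$ as the affine plane $AG(2,4)$, whose $20$ lines partition into $5$ parallel classes of $4$ pairwise disjoint lines, with two non-parallel lines meeting in exactly one point. A disjoint union of $k$ such planes on $n = 16k$ vertices has $20k = \tfrac{5n}{4}$ edges. For $P_4 = f_1 f_2 f_3 f_4$, the disjoint pairs $(f_1, f_3)$, $(f_1, f_4)$, $(f_2, f_4)$ would force $f_1, f_3, f_4$ into one parallel class, contradicting $f_3 \cap f_4 \ne \varnothing$. For $S^+_3$ with center $v$ on star-edges $e_1, e_2, e_3$ and pendant $e_4$ meeting $e_1 \setminus \{v\}$, the edge $e_4$ must be parallel to both $e_2$ and $e_3$, but these lie in distinct parallel classes, impossible. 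Hence both configurations are avoided.

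For the upper bound, the centerpiece is the following structural claim: if $H$ is $F$-free and some vertex $v$ satisfies $d(v) \ge 6$, then every non-$v$ vertex in an edge through $v$ has degree one. I would prove this directly for $F = S^+_3$: if $u \in e_1 \setminus \{v\}$ had a second edge $e_4$, linearity forces $v \notin e_4$, and the three vertices of $e_4 \setminus \{u\}$ each witness at most one edge through $v$; hence $e_4$ is disjoint from at least $d(v) - 1 - 3 \ge 2$ edges through $v$ other than $e_1$, and choosing two as $e_2, e_3$ yields $S^+_3$. The $F = P_4$ version follows from an analogous but more intricate analysis: with $f_3 := e_1$ and $f_4 := e_4$, exhibit an $f_2$ through $v$ disjoint from $e_4$ (again using $d(v) \ge 6$ and a linearity count) and then produce $f_1$ through some $a \in f_2 \setminus \{v\}$ that avoids $e_1 \cup e_4$, likely via a double count over the $\ge 6$ candidate vertices $a$ arising from the $\ge 2$ admissible $f_2$'s. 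Granted the claim, partition $V(H) = V_H \sqcup V_P \sqcup V_L$ where $V_H = \{v : d(v) \ge 6\}$, $V_P$ is its private degree-one neighborhood (three vertices per edge through $V_H$, pairwise disjoint since no edge carries two $V_H$-vertices), and $V_L$ is the rest. Edges touching $V_H$ number $\sum_{v \in V_H} d(v) = |V_P|/3$; edges inside $V_L$ number at most $\tfrac{5|V_L|}{4}$ since every vertex there has degree at most five. Summing, $|E(H)| \le |V_P|/3 + \tfrac{5|V_L|}{4} \le \tfrac{5n}{4}$, with equality forcing $V_H = V_P = \varnothing$.

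For the equality case, $|E(H)| = \tfrac{5n}{4}$ forces $V_H = \varnothing$ and, by handshaking, $d(v) = 5$ for every $v$. Linearity then gives that the five edges through any vertex $v$ cover $15$ distinct vertices, so the component of $v$ has exactly $16$ vertices; varying $v$, every pair in the component is covered by an edge, and by linearity by exactly one. Thus each component is a copy of $S(2,4,16)$, and $H$ is a disjoint union of such copies. The main obstacle is the $P_4$ version of the structural claim: a naïve greedy extension of a given $P_3$ only succeeds for a minimum-degree threshold well above $6$ (roughly $\delta \ge 8$), so closing the gap to $d(v) \ge 6$ requires a more global argument that rules out the Ramsey-like pathology in which all candidate pendant edges share a common vertex.
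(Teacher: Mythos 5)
Your lower-bound verification (realizing $S(2,4,16)$ as $AG(2,4)$ and using parallel classes) and your $S^{+}_3$ argument are sound; the latter is essentially the paper's own argument, repackaged as a local claim plus a partition count instead of a minimal counterexample, and the paper does not even spell out the lower-bound check, so that part is a genuine addition. The problem is the $P_4$ half. Your structural claim --- if $d(v)\ge 6$ in a $P_4$-free linear system then every non-$v$ vertex of every edge through $v$ has degree one --- is false as a local statement: take six quadruples $e_1,\dots,e_6$ through $v$ together with a single further quadruple $e'$ meeting $e_1$ in one vertex $u\ne v$ and otherwise consisting of new vertices. Any $P_4$ would have to use at least three of the $e_i$, two of which occupy non-consecutive positions on the path and hence must be disjoint, yet they share $v$; so this system is $P_4$-free while $d(u)=2$ and $d(v)=6$. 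Consequently the partition $V_H\sqcup V_P\sqcup V_L$ collapses for $P_4$: edges through a degree-$\ge 6$ vertex need not consist of pendant vertices, $V_P$ is not a disjoint union of private neighborhoods, and the bound $|E(H)|\le |V_P|/3+\tfrac{5|V_L|}{4}$ has no justification. You flag this obstacle yourself, but flagging it does not close it, and it is exactly where the paper spends almost all of its effort: working inside a minimal counterexample (minimum degree $\ge 2$, connectivity, more than $\tfrac{5n}{4}$ edges), it first proves $V(S_k)=V(H)$ by a case analysis on how external edges can meet the star, then exploits the abundance of intersecting and disjoint pairs avoiding the centre to reach a contradiction. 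That global information is precisely what rules out your ``Ramsey-like pathology,'' and none of it is available to your purely local claim.

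A secondary gap concerns the equality case. Even granting $5$-regularity, the assertion that the component of $v$ consists exactly of the $16$ vertices covered by the five edges through $v$ requires showing that no quadruple exits that set. For $S^{+}_3$ this is a one-line repair (an exiting quadruple has at most three vertices on the star, hence misses at least two star edges and immediately yields an $S^{+}_3$), but for $P_4$ the paper needs the entire $N_2(w)/N_3(w)$ analysis of its final paragraph. As written, your proposal proves the theorem for $S^{+}_3$ modulo that small repair, and leaves the $P_4$ case essentially open.
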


Denote by $E^{+}_4$ the linear 4-tree obtained from three pairwise disjoint quadruples by adding one quadruple that intersect all of them (see Figure \ref{fig3}, where straight lines with 4 vertices indicate quadruples). For convenience, we let $\varepsilon=0$ for
$n-4\equiv 0,1,2 \pmod{9}$, $\varepsilon=1$ for $n-4\equiv 3,4 \pmod{9}$, $\varepsilon=2$ for $n-4\equiv 5 \pmod{9}$, $\varepsilon=4$ for $n-4\equiv 6 \pmod{9}$, $\varepsilon=5$ for $n-4\equiv 7 \pmod{9}$, and $\varepsilon=8$ for $n-4\equiv 8 \pmod{9}$.
\begin{theorem}\label{th1.2}
Let $G$ be any $E^+_4$-free linear 4-graph $G$ on $n$ vertices. Then its number of edges satisfies
$$
12\lfloor\frac{n-4}{9}\rfloor+\varepsilon\le |E(G)|\le \frac{14(n-s)}{9},
$$
where $s$ is the number of vertices in $G$ with degree at least 8.
\end{theorem}

For a linear 4-path $P_k$, we slightly improve the general bound of Proposition \ref{prop1}.
\begin{theorem}\label{th1.3}
Let $n,k\ge 1$ be two positive integers. Then $ex^{lin}_{4}(n,P_k)\le 2.5kn$.
\end{theorem}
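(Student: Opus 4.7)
The plan is to adapt the minimum-degree reduction used in Proposition~\ref{prop1}. Suppose for contradiction that $H$ is a $P_k$-free linear quadruple system on $n$ vertices with $|E(H)|>2.5kn$, chosen so that $n$ is minimal. If some vertex $v$ satisfies $d(v)\le 2.5k$, then $|E(H-v)|>2.5k(n-1)$ makes $H-v$ a smaller counterexample, so we may assume that every vertex of $H$ has degree strictly greater than $2.5k$. It then suffices to show that this minimum-degree condition alone already forces $P_k\subseteq H$.

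I would fix a longest linear $4$-path $P_\ell=e_1e_2\cdots e_\ell$ in $H$ and argue $\ell\ge k$. Let $A=e_1\setminus\{u_1\}$ and $B=e_\ell\setminus\{u_{\ell-1}\}$ be the two triples of pendant endpoint vertices. By the maximality of $P_\ell$, for each $v\in A$ every edge $f$ with $v\in f\ne e_1$ must meet $V(P_\ell)\setminus e_1$ (otherwise prepending $f$ to $P_\ell$ gives a longer linear $4$-path); by linearity, each vertex of $V(P_\ell)\setminus e_1$ lies in at most one such $f$, so $d(v)\le 3\ell-2$, and the analogous bound holds for $v\in B$. The direct use of this inequality yields only $\ell\ge(\delta(H)+2)/3\sim 5k/6$.

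The improvement to $\ell\ge k$ requires a finer analysis of the edges through the endpoints. I would partition the non-extending edges $f$ through a vertex $v\in A$ according to $|f\cap V(P_\ell)|$ and, when $|f\cap V(P_\ell)|=2$, according to whether the second vertex is an interior pendant vertex of some $e_j$ or a joining vertex $u_i$; several configurations can be ruled out because they would permit a path-preserving rearrangement of $P_\ell$ yielding a linear $4$-path of length $\ell+1$. Summing the resulting refined inequalities over all six vertices of $A\cup B$, using linearity to avoid multiple-counting of edges, and combining with the hypothesis $\delta(H)>2.5k$ should force $\ell\ge k$, the required contradiction.

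The main obstacle is the rearrangement step. Rotations of a linear $4$-path are much more constrained than rotations in ordinary graphs, because non-consecutive edges must stay disjoint; most natural swap operations break this condition, and it is necessary to identify the limited class of edges through an endpoint which actually admit a length-increasing rearrangement. Executing this case analysis so that the factor $3$ in the bound $d(v)\le 3\ell-2$ is effectively reduced to a factor close to $12/5$ is the delicate step from which the constant $2.5k$ arises.
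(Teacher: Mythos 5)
Your overall strategy matches the paper's: assume a minimal counterexample, reduce to minimum degree at least $2.5k$, and derive a contradiction by analysing the edges through the end-vertices of a long linear path. (The paper uses minimality in $k$ to produce a $P_{k-1}$ rather than taking a longest path, but these are interchangeable here.) You also correctly diagnose that the naive bound $d(v)\le 3\ell-2$ at an endpoint only yields the constant $3$ of Proposition~\ref{prop1}, and that the whole content of the theorem is the refinement that brings $3$ down to $2.5$.

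That refinement, however, is exactly what your proposal does not supply, and it is a genuine gap rather than a routine verification. The paper's execution is as follows: for the three origin vertices $x_1,x_2,x_3$ and three terminus vertices of $P_{k-1}$ it isolates the sets $A_1(x_i)$, $B_1(x_j)$ of quadruples through these vertices that meet the path in exactly one further (internal) vertex; it proves that ``touching pairs'' cannot occur and that ``crossing pairs'' must intersect (Claims~3.1--3.3); and then, for each index $i\in\{2,\dots,k-2\}$, it exhibits at least six pairs (origin or terminus vertex, internal vertex) that are not covered by any quadruple of $A_1\cup B_1$. Summing these ``missing quadruples'' gives $|A_1|+|B_1|\le 12(k-3)$, hence some origin vertex $x_a$ and terminus vertex $x_b$ with $|A_1(x_a)|+|B_1(x_b)|\le 4(k-3)$; combining this with the linearity inequality $3|A_3(x_a)|+2|A_2(x_a)|+|A_1(x_a)|\le 3k-3$ (and its analogue at $x_b$) forces one of $d(x_a),d(x_b)\le 2.5k-4.5$, contradicting the minimum degree. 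Your proposal gestures at a case analysis of this kind but explicitly defers the rearrangement/exclusion arguments and the summation that produces the factor $2.5$; without the specific touching/crossing claims and the six-missing-quadruples-per-index count (or an equivalent), the proof is not complete. Note also that the argument must be run jointly over all six end-vertices: a single endpoint need not have small degree, and the paper only extracts one origin--terminus pair with a small combined count.
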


\subsection{Result for 4-matchings}
\quad In 1959, Erd\H{o}s and Gallai \cite{EG} determined the Tur\'{a}n number of matchings. In 1965, Erd\H{o}s \cite{Er} extended this problem to hypergraph,
proved that for sufficiently large $n$, the maximum number of edges in an $M^{(r)}_{k}$-free $r$-graph occurs if all edges intersect a fixed set of $k-1$ vertices. For linear Tur\'{a}n number, Gy\'{a}rf\'{a}s, Ruszink\'{o} and S\'{a}rk\"{o}zy \cite{GRS} obtained an analogue result.

\begin{theorem}[\cite{GRS}]
Let $n, k$ be two positive integers. For $n>16(k-1)^2+1$,
$$ex^{lin}_{3}(n,M_{k}^{(3)})=f(n,k),$$
where $f(n,k)$ denotes the maximum number of triples that can intersect a fixed $(k-1)$-element vertex subset in a linear triple system on $n$ vertices.
\end{theorem}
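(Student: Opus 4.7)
The plan is to handle the two bounds separately. For the lower bound $ex^{lin}_3(n,M_k^{(3)})\ge f(n,k)$, I would exhibit the extremal construction directly: fix a $(k-1)$-vertex set $S$ and take any linear triple system on $n$ vertices achieving $f(n,k)$ in which every triple meets $S$. Such a system cannot contain $k$ pairwise disjoint triples, since all of its triples must pass through the $(k-1)$-set $S$.

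For the upper bound, let $H$ be an $M_k^{(3)}$-free linear triple system on $n$ vertices; the goal is to prove $|E(H)|\le f(n,k)$. The high-level strategy is to locate a vertex set of size at most $k-1$ through which essentially all edges of $H$ pass. First, I would take a maximum matching $\mathcal{M}\subseteq E(H)$; since $H$ is $M_k^{(3)}$-free we have $|\mathcal{M}|\le k-1$, and the covered set $T:=V(\mathcal{M})$ satisfies $|T|\le 3(k-1)$ and meets every edge by maximality. Linearity yields the degree bound $d(v)\le\lfloor(n-1)/2\rfloor$ for every vertex.

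Next, define the \emph{heavy} vertices $A:=\{v\in V(H):d(v)\ge 3k-2\}$ and set $B:=T\setminus A$. The central claim is $|A|\le k-1$. Suppose instead that $v_1,\dots,v_k\in A$ are distinct. I would build a $k$-matching greedily: at step $i$, pick an edge $e_i\ni v_i$ disjoint from $e_1\cup\cdots\cup e_{i-1}\cup\{v_j:j\ne i\}$. The set of ``forbidden'' partner vertices has cardinality $3(i-1)+(k-i)\le 3k-3$, and by linearity each forbidden vertex blocks at most one edge through $v_i$; since $d(v_i)\ge 3k-2$ at least one valid $e_i$ exists. Iterating produces an $M_k^{(3)}$, contradicting the hypothesis.

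With $|A|\le k-1$ in hand, the edges of $H$ meeting $A$ form a linear triple system whose edges all intersect a set of size at most $k-1$, so their number is at most $f(n,k)$. Edges avoiding $A$ must still meet $B$, and since every vertex of $B$ has degree below $3k-2$, such edges number at most $|B|(3k-3)\le 9(k-1)^2$. This gives $|E(H)|\le f(n,k)+9(k-1)^2$ nearly for free. The main obstacle is closing this $O(k^2)$ slack to obtain the sharp $|E(H)|\le f(n,k)$, and this is where the hypothesis $n>16(k-1)^2+1$ enters: it forces $f(n,k)$ to be of order $(k-1)n/2$, dominating the slack, and I would expect to close the gap by a swap/exchange argument — any edge $e$ avoiding $A$, combined with the abundance of edges through the heavy vertices, should either enable direct assembly of $k$ pairwise disjoint edges (exploiting the degree reservoir at each vertex of $A$) or allow rerouting to a cover of size $k-1$ containing every edge, contradicting $|E(H)|>f(n,k)$ with the $M_k^{(3)}$-freeness of $H$.
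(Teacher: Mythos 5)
The statement you are addressing is quoted from [GRS]; the present paper does not reprove it, but it does prove the direct $4$-uniform analogue (Theorem \ref{th1.4}), which is the natural benchmark. Your lower bound is fine, and so is the claim $|A|\le k-1$ (the count of at most $3(i-1)+(k-i)\le 3k-3$ forbidden vertices, each blocking at most one edge through $v_i$ by linearity, is correct). The genuine gap is the final step: what you actually establish is only $|E(H)|\le f(n,k)+9(k-1)^2$, and the passage from this to the exact equality is left as ``I would expect to close the gap by a swap/exchange argument'' --- but that passage is the entire content of the theorem. Moreover, the exchange you sketch only works when $|A|=k-1$: if an edge $e$ avoids $A$ and every $a_i\in A$ has degree $\ge 3k-2$, one can greedily pick $|A|$ pairwise disjoint edges through $A$ avoiding $e$, but this yields $|A|+1$ disjoint edges, a contradiction only if $|A|=k-1$. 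When $|A|\le k-2$ no matching contradiction is available and you must win by counting, showing $|A|\lfloor(n-1)/2\rfloor+9(k-1)^2<f(n,k)$; this needs roughly $n>18(k-1)^2$, overshooting the stated threshold $16(k-1)^2+1$. So even when completed, your route gives the theorem only with a worse constant.

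The paper's proof of the analogue proceeds differently and avoids the additive slack entirely: induction on $k$. By the inductive hypothesis $H$ contains an $M_{k-1}$ with edges $X_1,\dots,X_{k-1}$; the edges meeting $V(M_{k-1})$ in at least two vertices number $O(k^2)$ by linearity, and every remaining edge meets $V(M_{k-1})$ in exactly one vertex by $M_k$-freeness. The key structural lemma your approach lacks is local to each matching edge: if one vertex of $X_i$ has degree at least $3$ (resp.\ $4$ in the quadruple case) into the class $E_1$ of remaining edges, then every other vertex of $X_i$ has degree $0$ into $E_1$, since otherwise two disjoint $E_1$-edges replace $X_i$ and extend the matching. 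Hence each $X_i$ is either \emph{good} (a single dominating vertex carries all its $E_1$-edges) or \emph{bad} (only $O(k)$ incident $E_1$-edges). If fewer than $k-1$ matching edges are good, the total edge count falls below the lower bound for $f(n,k)$ once $n$ exceeds the threshold --- this is where the hypothesis on $n$ enters, not as a way to ``dominate slack''; if all $k-1$ are good, the $k-1$ good vertices cover every edge by exactly your greedy argument, giving $|E(H)|\le f(n,k)$ on the nose. Restructuring your argument around this per-matching-edge dichotomy, rather than a global heavy-vertex set, is what eliminates the $9(k-1)^2$ error term.
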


In this paper, we extend their result to 4-matchings.

\begin{theorem}\label{th1.4}
Let $n, k$ be two positive integers. For $n>37(k-1)^2+3$,
$$ex^{lin}_{4}(n,M_k)=g(n,k),$$
where $g(n,k)$ denotes the maximum number of quadruples that can intersect a fixed $(k-1)$-element vertex subset in a linear quadruple system on $n$ vertices.
\end{theorem}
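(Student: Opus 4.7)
The plan is to prove the non-trivial upper bound $ex^{lin}_4(n,M_k)\le g(n,k)$ by contradiction; the matching lower bound is immediate, since in any linear quadruple system whose every edge meets a fixed $(k-1)$-vertex set, the pigeonhole principle forbids $k$ pairwise disjoint edges. So suppose $H$ is an $M_k$-free linear quadruple system on $n>37(k-1)^2+3$ vertices with $|E(H)|>g(n,k)$, and aim to extract a copy of $M_k$.

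Fix a maximum matching $M=\{e_1,\dots,e_m\}$ in $H$; since $H$ is $M_k$-free, $m\le k-1$, and by maximality every edge of $H$ intersects $S:=V(M)$. For each $v\in S$ set $\alpha(v):=|\{e\in E(H):e\cap S=\{v\}\}|$. The pivotal structural lemma is that for each $e_i\in M$ there is at most one vertex $v\in e_i$ with $\alpha(v)\ge T$, where the threshold $T$ (to be taken as $T=10$) is tuned to the two cases below. Indeed, if $v,v'\in e_i$ both satisfied $\alpha\ge T$, one could pick $f_v$ from the $\alpha(v)$-family; since its three vertices outside $S$ each lie in at most one edge through $v'$ by linearity, at most three edges in the $\alpha(v')$-family are blocked, leaving a disjoint $f_{v'}$ whenever $T\ge 4$. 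Substituting $\{f_v,f_{v'}\}$ for $e_i$ in $M$ would then yield a matching of size $m+1$, contradicting maximality. Define $W:=\{v\in S:\alpha(v)\ge T\}$ (the set of \emph{heavy} vertices); the lemma gives $|W|\le m\le k-1$.

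If $|W|=k-1$ then $m=k-1$ and every $e_i$ contains a unique heavy vertex $w_i$. Either every edge of $H$ meets $W$, yielding $|E(H)|\le g(n,k)$, a contradiction; or some \emph{bad} edge $e^*$ satisfies $e^*\cap W=\emptyset$. Choose such $e^*$ with $s:=|e^*\cap S|$ minimal; then $e^*$ meets $e_{j_1},\dots,e_{j_s}$ at $s$ light vertices with $1\le s\le 4$. I would replace these matching edges by $e^*$ and greedily extend by $s$ more disjoint edges $f_{j_l}$ through the heavy vertices $w_{j_l}$, each chosen from the $\alpha(w_{j_l})$-family so that its three outside vertices avoid the $4-s$ outside vertices of $e^*$ and the $3(l-1)$ previously used outside vertices. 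By linearity, at most $(4-s)+3(l-1)\le 2s+1$ candidates are blocked at step $l$, so $T=10\ge 2s+2$ guarantees an available $f_{j_l}$. The union of $e^*$, the $f_{j_l}$'s, and the remaining $k-1-s$ untouched edges of $M$ then forms a copy of $M_k$, a contradiction.

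If instead $|W|\le k-2$, I would bound $|E(H)|$ directly. The edges through $W$ number at most $g(n,|W|+1)\le g(n,k-1)$, while bad edges contribute at most $(T-1)\cdot|S\setminus W|+\binom{|S\setminus W|}{2}\le 9\cdot 4(k-1)+\binom{4(k-1)}{2}=O(k^2)$ by the $\alpha$-bound on light vertices and linearity of pairs in $S\setminus W$. On the other hand $g(n,k)-g(n,k-1)\ge\lfloor(n-1)/3\rfloor-O(k)$, obtained by augmenting an extremal $(k-2)$-set by a vertex of maximum fresh degree. For $n>37(k-1)^2+3$ this linear-in-$n$ gap strictly dominates the quadratic-in-$k$ bad-edge count, forcing $|E(H)|\le g(n,k)$ and completing the contradiction. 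The main obstacle is the simultaneous calibration of $T$: it must be at least $10$ so that the greedy extension in the $|W|=k-1$ case works for all $1\le s\le 4$, yet cannot be so large that the bad-edge bound overruns the $\Omega(n)$ budget in the $|W|\le k-2$ case; the hypothesis $n>37(k-1)^2+3$ is precisely what makes $T=10$ satisfy both demands simultaneously.
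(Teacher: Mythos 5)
Your strategy is essentially the paper's: both arguments fix a copy of $M_{k-1}$ (you via a maximum matching, the paper via induction on $k$), prove by the same swap argument that each matching edge carries at most one vertex supporting many quadruples that meet $V(M)$ only there, split according to whether all $k-1$ matching edges carry such a heavy vertex, finish the saturated case by a greedy extension to an $M_k$, and finish the deficient case by counting. Your constant threshold $T=10$ in place of the paper's $4(k-1)$, and your comparison against $g(n,k-1)$ plus an error term rather than directly against an explicit lower bound for $g(n,k)$, are variations of detail, not of substance.

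The one place where your write-up has a real hole is the inequality $g(n,k)-g(n,k-1)\ge\lfloor (n-1)/3\rfloor-O(k)$, which you justify by ``augmenting an extremal $(k-2)$-set by a vertex of maximum fresh degree.'' That sketch does not obviously work: adding a new quadruple through the new vertex $a$ requires all six of its pairs to be uncovered, not only the three pairs at $a$, and the extremal configuration for $g(n,k-1)$ already covers $\Theta(kn)$ pairs, so no immediate greedy count gives you $\lfloor (n-1)/3\rfloor-O(k)$ fresh quadruples through $a$. The paper sidesteps this by proving an explicit lower bound $g(n,k)\ge (k-1)\lfloor\frac{n-k+1}{3}\rfloor-O(k^2)$ from optimal $2$-$(m,4,1)$ packings (Lemma \ref{lem4.1}) together with a matching upper bound $g(n,k)\le (k-1)\lfloor\frac{n-k+1}{3}\rfloor+\frac{1}{2}\binom{k-1}{2}$, and it compares the deficient-case count directly against the lower bound. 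If you route your increment claim through those two estimates you obtain $g(n,k)-g(n,k-1)\ge\frac{n}{3}-O(k^2)$ --- note $O(k^2)$, not $O(k)$ --- which still dominates your bad-edge count $9\cdot 4(k-1)+\binom{4(k-1)}{2}=O(k^2)$ under the hypothesis $n>37(k-1)^2+3$, but the constants must then actually be checked rather than asserted. So the architecture is sound; the quantitative input about $g$ is the missing piece and needs to be supplied via the packing bounds.
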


Note that Theorem \ref{th1.4} holds only for sufficiently large $n$.
For example, $g(n,2)=\lfloor\frac{n-1}{3}\rfloor<ex^{lin}_{4}(n,M_2)=13$ for $n<40$ as the Steiner system $S(2,4,13)$ shows.
For $k=2$, $n\ge 40$ is the sharp threshold in Theorem \ref{th1.4},
because in a linear quadruple system pairwise intersecting quadruples either form a linear 4-star or form a subsystem of
the Steiner system $S(2,4,13)$.

\section{Proofs of Proposition \ref{prop3}, Theorems \ref{th1.1} and \ref{th1.2}}
\begin{proof}[Proof of Proposition \ref{prop3}]
Firstly, we prove that $ex^{lin}_{4}(n,P_3)\le n$. Let $H$ be a $P_3$-free linear quadruple system on $n$ vertices with more than $n$ quadruples.
We may assume that $H$ is a minimal counterexample. By the minimality, $H$ contains only one connected component. Otherwise $H$ contain at least two connected
components $H_1$ and $H_2$. Then $e(H)=e(H_1)+e(H_2)>n$.
Therefore either $e(H_1)>|V(H_1)|$ or $e(H_2)>|V(H_2)|$ holds, otherwise $e(H)=e(H_1)+e(H_2)\le |V(H_1)|+|V(H_2)|=n$, a contradiction.
Thus, we find a smaller counterexample, a contradiction. We claim that each vertex of $H$ has degree at least two, otherwise we can find a smaller counterexample by deleting a vertex with a smaller degree. We may assume that $H$ contains a vertex with degree at least 5, otherwise $e(H)\le n$. Thus we can select a linear 4-star $S_k$ with center vertex $p$, where $k\ge 5$. Then we select another vertex $q$ in $S_k$. Since the vertex $q$ has degree at least two, there is a quadruple $e$ such that $q\in e$ but $p\notin e$. Then $e$ with two
suitable quadruples of $S_k$ form a $P_3$, a contradiction. Thus, $ex^{lin}_{4}(n,P_3)\le n$.

From the above argument we can also see that if $H$ is a $P_3$-free linear quadruple system on $n$ vertices with exactly $n$ quadruples, then each connected component of $H$ is 4-regular, $i.e.$ each vertex of $H$ has degree 4.
Select one linear 4-star $A=S_4$, we claim that any quadruple intersecting it must be completely inside $A$.
Otherwise, we can find a $P_3$ leading to a contradiction. Thus, each connected component of $H$ is 4-regular on 13 vertices, $i.e.$ the Steiner system $S(2,4,13)$.
\end{proof}

\begin{proof}[Proof of Theorem \ref{th1.1}]
Let $F\in \{S^{+}_3, P_4\}$. Firstly, we prove that $ex^{lin}_{4}(n,F)\le \frac{5n}{4}$.
Let $H$ be an $F$-free linear quadruple system on $n$ vertices with more than $\frac{5n}{4}$ quadruples. We may
assume that $H$ is a minimal counterexample. By the minimality, $H$ contains only one connected component.
We claim that each vertex of $H$ has degree at least two, otherwise we can find a smaller counterexample by deleting a vertex
with a smaller degree. We may assume that $H$ contains a vertex with degree at least 6, otherwise $e(H)\le \frac{5n}{4}$.
Thus we can select a linear 4-star $S_k$ with center vertex $p$, where $k\ge 6$. Let $e_i=\{p,x_i,y_i,z_i\}$, $i\in [k]$ be the quadruples of $S_k$.
In the following, we will first discuss the case of $F=S^{+}_3$ and then $F=P_4$.

{\bf Case 1. $F=S^{+}_3$.} Select another vertex $q\neq p$ in $S_k$. Since the vertex $q$ has degree at least two,
there is a quadruple $e$ such that $q\in e$ but $p\notin e$. Then $e$ with three
suitable quadruples of $S_k$ form an $S^{+}_3$, a contradiction. Therefore if $H$ is an $S^{+}_3$-free linear quadruple system
on $n$ vertices with exactly $\frac{5n}{4}$ quadruples, then each connected component of $H$ is 5-regular.
Select one linear 4-star $A=S_5$, we claim that any quadruple intersecting it must
be completely inside $A$. Otherwise, we can find an $S^{+}_3$ leading to a contradiction. Thus, each connected component of $H$ is 5-regular on 16 vertices, $i.e.$ the Steiner system $S(2,4,16)$.

{\bf Case 2. $F=P_4$.} In this case, we first give the following claim.
\begin{claim}
$V(S_k)=V(H)$.
\end{claim}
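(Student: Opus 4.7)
The plan is to argue by contradiction. Suppose $V(S_k)\neq V(H)$. Since $H$ is connected and $p\in V(S_k)$, there exists an edge $f\in E(H)$ with $f\cap V(S_k)\neq\emptyset$ and $f\setminus V(S_k)\neq\emptyset$. I would first note that $p\notin f$, since every edge containing $p$ is one of the star edges $e_1,\dots,e_k$ and is therefore contained in $V(S_k)$. Consequently $f$ contains some $a\in V(S_k)\setminus\{p\}$ lying in a unique star edge $e_i$, together with some $b\in V(H)\setminus V(S_k)$. Invoking $d(b)\geq 2$, I would pick a second edge $g\neq f$ through $b$; linearity forces $g\cap f=\{b\}$ and (by the same argument as for $f$) $p\notin g$.

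The core construction is the putative linear $4$-path $(e_j,e_i,f,g)$ for a suitable auxiliary star edge $e_j$. The three consecutive incidences $e_j\cap e_i=\{p\}$, $e_i\cap f=\{a\}$ and $f\cap g=\{b\}$ are automatic and occur at three distinct vertices, so the task reduces to arranging that the three non-consecutive intersections $e_j\cap f$, $e_j\cap g$ and $e_i\cap g$ are all empty. Since $f$ has at most three vertices in $V(S_k)\setminus\{p\}$ it meets at most three star edges, and the same bound holds for $g$. With $k\geq 6$, this leaves at least one star edge $e_j$ avoiding $f\cup g$ in typical cases, and since $f$ may have up to three pivot choices for $a$, the star edge $e_i$ can usually be chosen so that $e_i\cap g=\emptyset$.

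I expect the proof to split into subcases according to $s:=|f\cap(V(S_k)\setminus\{p\})|\in\{1,2,3\}$. The cases $s\leq 2$ should follow directly from the above counting, since at most $s+3\leq 5$ star edges are hit in total and at least $k-5\geq 1$ choices remain for $e_j$, while the multiplicity of pivots lets me sidestep $e_i\cap g\neq\emptyset$. The main obstacle is the extremal configuration $s=3$ in which $g$ meets exactly the three star edges $e_{i_1},e_{i_2},e_{i_3}$ already hit by $f$, so that $g=\{b,w_1,w_2,w_3\}$ with $w_\ell\in e_{i_\ell}\setminus\{p,a_\ell\}$; here every admissible $e_i$ meets $g$ and the direct path fails. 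To break this deadlock I would exploit the remaining minimum-degree conditions: first, try an alternative edge through $b$ (available if $d(b)\geq 3$) or a different boundary-crossing edge; failing that, invoke $d(u)\geq 2$ at the fourth vertex $u\in e_{i_\ell}\setminus\{p,a_\ell,w_\ell\}$ (which exists because $|e_{i_\ell}|=4$) to produce a new edge $h$ meeting $e_{i_\ell}$ only in $u$, and then assemble a $P_4$ from $h$ together with two suitable star edges and one of $f,g$. The resulting contradiction with the $P_4$-freeness of $H$ yields $V(S_k)=V(H)$.
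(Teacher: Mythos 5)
Your overall strategy coincides with the paper's: find a boundary-crossing quadruple, a second quadruple sharing one of its external vertices, and complete these to a forbidden $P_4$ with two star edges, the only obstructions being the non-consecutive intersections. However, the execution has a genuine gap precisely where you declare the argument ``direct.'' In the case $s=1$ the edge $f$ meets exactly one star edge $e_i$, so there is exactly one pivot, and nothing prevents $g$ (indeed, every edge other than $f$ through each of the three external vertices of $f$) from meeting $e_i$: the two vertices of $e_i\setminus\{p,a\}$ can each absorb several such edges without violating linearity. Then $(e_j,e_i,f,g)$ fails for every admissible choice, the ``multiplicity of pivots'' you invoke does not exist, and reversals or reorderings such as $(e_j,e_i,g,f)$ fail because $e_i\cap f\neq\emptyset$ is a non-consecutive intersection. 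The same problem occurs for $s=2$ when $g$ meets both pivot star edges. So the cases you dismiss as routine counting are not closed, and the case $s=3$ --- which is where essentially all of the work lies --- is only a plan, not a proof.

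For comparison, the paper's proof first shows that \emph{every} edge through an external vertex must itself meet $V(S_k)$ in a non-center vertex (a step you omit, and which substantially constrains $g$), then classifies the configurations that survive the direct path construction: for $k\ge 7$, both crossing edges meet the same three star edges; for $k=6$, additionally they may jointly cover all six. Each surviving configuration is killed by introducing third and fourth auxiliary edges at vertices of the untouched star edges and running a counting argument (twelve candidate vertices versus at most nine pairs between $f_1$ and $f_2$ plus one remaining triple). Your sketch gestures at this (``invoke $d(u)\ge 2$ at the fourth vertex\dots and then assemble a $P_4$''), but without the classification of which intersection patterns are actually possible and the final count, the contradiction is not established. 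To repair the proposal you would need to (i) prove $g$ crosses into $V(S_k)$, (ii) handle the small-$s$ obstructions explicitly rather than by a pivot count, and (iii) carry out the extremal-configuration analysis in full.
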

\begin{proof}
Indeed, if there exists one vertex $w\in V(H)$ but $w\notin V(S_k)$, then the shortest path $P$ from $w$ to $V(S_k)$ has just one quadruples.
Otherwise, we can extend $P$ to a $P_4$ with two suitable quadruples of $S_k$, a contradiction. Thus there exist two quadruples $f_1, f_2$ containing $w$ such that both of them intersect $S_k$ in vertices different from the center vertex $p$ of $S_k$. In the following, we will discuss two cases.

{\bf Case 1.} $k\ge 7$.
Since $k\ge 7$, we can find a quadruple (say $e_i$) of $S_k$ disjoint from $(f_1\cap S_k)\cup (f_2\cap S_k)$.
Then we can find a $P_4$ containing the quadruples $e_i,e_j,f_1,f_2$ unless
both of $f_1$ and $f_2$ intersect the same three quadruples of $S_k$, where $e_j\in E(S_k)$ is a quadruple containing a vertex from $f_1\cap S_k$.
If both of $f_1$ and $f_2$ intersect the same three quadruples of $S_k$, say $e_1$, $e_2$ and $e_3$,
then we consider an arbitrary vertex $v\in e_i$ with $i\ge 4$.
Since $v$ has degree at least two, there must exist a quadruple $f_3$ containing $v$ different from $e_i$. Note that $w\notin f_3$. Otherwise we can find a  $P_4$ containing the quadruples $f_1,f_3,e_i,e_i'$, where $e_i'\in E(S_k)$ and $e_i'\cap (f_1\cup f_3)=\emptyset$.
Then we claim that either $(i)$:$|f_3\cap (e_1\cup e_2\cup e_3)|=3$ and $f_3\cap (f_1\cup f_2)=\emptyset$ or $(ii)$:$|f_3\cap (f_1\cup f_2)|=2$.
Indeed, if $f_3\cap (e_1\cup e_2\cup e_3)=\emptyset$ then the quadruples $f_3, e_i, e_1, f_1$ form a $P_4$, a contradiction.
If $1\le |f_3\cap (e_1\cup e_2\cup e_3)|\le 2$ and $f_3\cap (f_1\cup f_2)=\emptyset$, then we can find a $P_4$ containing the quadruples $f_1,e_j,e_i,f_3$ ($e_j\in \{e_1,e_2,e_3\}$ satisfies that $e_j\cap f_3=\emptyset$), a contradiction.
If $|f_3\cap (f_1\cup f_2)|=1$ (without loss of generality, $|f_3\cap f_1|=1$),
then we can find a $P_4$ containing the quadruples $e_i,f_3,f_1,f_2$, a contradiction.
Thus, the claim holds. However, there are only 9 pairs of vertices between $f_1$ and $f_2$
and one remaining triple among $e_1$, $e_2$ and $e_3$, but we have at least 12 vertices that may play the role of $v$, a contradiction.

{\bf Case 2.} $k=6$.
We have two possibilities for $f_1$ and $f_2$: $(i)$ both of $f_1$ and $f_2$ intersect the same three quadruples of $S_6$;
$(ii)$ $f_1\cup f_2$ intersect all quadruples of $S_6$. In any other cases,
we can find a $P_4$ defined by $f_1$, $f_2$ and two suitable quadruples leading to a
contradiction. We consider firstly the possibility $(i)$.
Assume that $f_1$ and $f_2$ intersect the same three quadruples $e_1, e_2, e_3$ of $S_6$. Consider an arbitrary vertex $v$
of $e_i$ with $i\ge 4$, there must exist a quadruple $f_3$ containing $v$ different from $e_i$. For another vertex $u$ in $e_i$,
there must exist a quadruple $f_4$ containing $u$ different from $e_i$. Assume $f_3\cap (f_1\cup f_2)=\emptyset$.
If $|f_3\cap (e_1\cup e_2\cup e_3)|=3$, then we can find a $P_4$ defined by $f_4$, one of $f_1,f_2,f_3$ and two suitable quadruples from $S_6$
or $f_4$, two of $f_1,f_2,f_3$ and one suitable quadruple from $S_6$ leading to a contradiction.
If $|f_3\cap (e_1\cup e_2\cup e_3)|\le 2$, then we can find a $P_4$ defined by $f_3$, one of $f_1,f_2$ and two suitable quadruples from $S_6$
leading to a contradiction.
Assume $|f_3\cap (f_1\cup f_2)|=1$. Then we can find a $P_4$ defined by the quadruples $e_i, f_3, f_1, f_2$ leading to a contradiction.
Assume $|f_3\cap (f_1\cup f_2)|=2$. Then we can find a $P_4$ defined by $e_i, f_3, f_1$ and one suitable quadruple from $S_6$ leading to a contradiction.

Consider the second possibility $(ii)$. Without loss of generality, assume that $f_1\cap e_i=\{x_i\}$ and $f_2\cap e_j=\{x_j\}$
for any $i\in \{1,2,3\}$, $j\in \{4,5,6\}$.
Consider another vertex $y_1\in e_1$, there must exist a quadruple $f_3$ containing $y_1$ different from $e_1$.
If $f_3\cap (f_1\cup f_2)=\emptyset$, then we can find
a $P_4$ defined by the quadruples $f_3, e_1, f_1, f_2$ in this order leading to a contradiction. If $|f_3\cap (f_1\cup f_2)|=1$, then we can find a $P_4$ defined by
$f_3, f_1, f_2$ and one suitable quadruple from $S_6$ leading to a contradiction. If $|f_3\cap (f_1\cup f_2)|=2$, then we can find a $P_4$ defined by
$f_3$, one of $f_1, f_2$ and two suitable quadruples from $S_6$ leading to a contradiction.

Combining all the cases, the claim holds, $i.e.$ $V(S_k)=V(H)$.
\end{proof}

Also, we claim that there exists a pair $f_1, f_2$ of intersecting quadruples in $H$ not containing the center vertex $p$ of $S_k$.
Indeed, otherwise $H$ contain at most $\frac{n-1}{4}+\frac{n-1}{3}<\frac{5n}{4}$ edges, a contradiction. We have three possibilities for $f_1$ and $f_2$:
$(i)$ $f_1$ and $f_2$ intersect exactly the same four quadruples of $S_k$, say $e_1, e_2, e_3$ and $e_4$; $(ii)$ $k=7$ and $f_1\cup f_2$
intersect all quadruples of $S_7$; $(iii)$ $k=6$ and $f_1\cup f_2$ intersect all quadruples of $S_6$. In any other cases, we can find a $P_4$
defined by $f_1$, $f_2$ and two suitable quadruples leading to a contradiction. Then for $k>7$, we always have the first possibility $(i)$.
Then we consider the vertex $x_5$ in $e_5$. There must exist a quadruple $f_3$ containing $x_5$ different from $e_5$.
Based on the above argument, $f_3$ cannot intersect $f_1$, but $f_1$ and $f_3$ with two suitable quadruples of $S_k$ form a $P_4$, a contradiction.
Thus we have either $k=7$, $n=22$ and $|E(H)|\ge 28$ or $k=6$, $n=19$ and $|E(H)|\ge 24$.
If $k=7$, then for any pair of intersecting quadruples in $H$ not containing the
center vertex $p$ of $S_7$ we must have the above two possibilities $(i)$ and $(ii)$.
If $k=6$, then for any pair of intersecting quadruples in $H$ not containing the center vertex $p$ of $S_6$
we must have the possibilities $(i)$ and $(iii)$.

We claim that there must exist two disjoint quadruples $g, h$ in $H$ not containing the center vertex $p$ of $S_k$ for $k\in \{6,7\}$.
Indeed, otherwise for $k=7$ ($k=6$) the at least 21 (18) remaining quadruples not containing $p$ are pairwise intersecting. Thus they form a linear 4-star or a subsystem of $S(2,4,13)$. But within 21 (18) vertices there is no room for an $S_{21}$ ($S_{18}$)
and subsystem of an $S(2,4,13)$ cannot have 21 (18) quadruples either. Thus we have $g, h$ as required.
For $k\in \{6, 7\}$, we can find a $P_4$ defined by $g, h$ and two suitable quadruples of $S_k$ leading to a contraction
unless $g$ and $h$ intersect the same four quadruples $e_1, e_2, e_3, e_4$ of $S_k$. Assume that $g=\{x_1, x_2, x_3, x_4\}$ and $h=\{y_1, y_2, y_3, y_4\}$.
For $e_5\in E(S_k)$ ($k\in \{6, 7\}$), there must exist a quadruple $f$ containing $x_5$ but not containing $p$ in $H$.
Furthermore, we have that $f\cap g\neq \emptyset$ and $f\cap h\neq \emptyset$ must
hold for $k\in \{6, 7\}$. Otherwise we can find a $P_4$ formed by $f$, one of $\{g, h\}$ and two suitable quadruples, a contradiction.
If $k=7$, then we can find a $P_4$ defined by $f$, one of $\{g,h\}$ and two suitable quadruples, a contradiction.
If $k=6$, then there must exist two quadruples $f'$ and $f''$ such that $y_5\in f', z_5\in f''$ and $e_5\notin \{f', f''\}$.
Thus we also have $f'\cap g\neq \emptyset, f'\cap h\neq \emptyset$ and $f''\cap g\neq \emptyset, f''\cap h\neq \emptyset$.
Since $f\cap g\neq \emptyset$ and $f\cap h\neq \emptyset$, $f$ and $g$ ($f$ and $h$) must satisfy the possibility $(iii)$.
The same argument hold for $f'$ and $f''$. If $f, f', f''$ are pairwise disjoint, then we can find a $P_4$ defined by two of $\{f, f', f''\}$
and two suitable quadruples of $S_6$, a contradiction. Thus we may assume that $f\cap f'\neq \emptyset$.
If $f\cap f'\in \cup_{i=1}^4 e_i$, then $f$ and $f'$ don't satisfy the possibilities $(i)$ and $(iii)$, a contradiction. Thus, $f\cap f'\in e_6$.

Assume $f$ and $f'$ satisfy the possibility $(i)$. Then $f$ and $f'$ intersect $e_5, e_6$ and the same two quadruples from $\{e_1, e_2, e_3, e_4\}$, say
$e_1, e_2$. If $f''$ is disjoint with $f$ and $f'$, then $f''$ intersect $e_5, e_6, e_3$ and $e_4$. But we can find a $P_4$ defined
by $f''$, one of $\{f, f'\}$ and two suitable quadruples of $S_6$, a contradiction.
We may assume that $f''$ intersect $f$ and $f'$. Note that $f, f'$ and $f''$ intersect $e_6\in E(S_6)$ in the same vertex, say $x_6$.
Consider another vertex $y_6$ in $e_6$, there must exist a quadruple $\tilde{f}$ contains $y_6$ but not containing $p$ in $H$.
Note that $\tilde{f}$ must intersect at least one of $f, f'$ and $f''$.
Then we can find a $P_4$ defined by $\tilde{f}$, two of $\{f,f',f''\}$ and one suitable quadruple of $S_6$ or
$\tilde{f}$, one of $\{f,f',f''\}$ and two suitable quadruples of $S_6$, a contradiction.
This finishes the proof of $ex_{4}^{lin}(n, P_4)\le \frac{5n}{4}$.

Assume $f$ and $f'$ satisfy the possibility $(iii)$. If $f''$ is disjoint with $f$ and $f'$,
then the two statements $f''\cup f$ covers exactly 4 quadruples of $S_6$ and $f''\cup f'$ covers
exactly 4 quadruples of $S_6$ must hold. But this contradict the facts $(f\cap \{e_1, e_2, e_3, e_4\})\cup (f'\cap \{e_1, e_2, e_3, e_4\})=\{e_1, e_2, e_3, e_4\}$ and $(f\cap \{e_1, e_2, e_3, e_4\})\cap (f'\cap \{e_1, e_2, e_3, e_4\})=\emptyset$. We may assume that $f''$ intersect $f$ and $f'$.
Note that $f, f'$ and $f''$ intersect $e_6\in E(S_6)$ in the same vertex, say $x_6$.
Consider another vertex $y_6$ in $e_6$, there must exist a quadruple $\tilde{f}$ contains $y_6$ but not containing $p$ in $H$.
Note that $\tilde{f}$ must intersect at least one of $f, f'$ and $f''$.
Then we can find a $P_4$ defined by $\tilde{f}$, two of $\{f,f',f''\}$ and one suitable quadruple of $S_6$ or
$\tilde{f}$, one of $\{f,f',f''\}$ and two suitable quadruples of $S_6$, a contradiction.

In the case of $|E(H)|=\frac{5n}{4}$, the above argument shows that each connected component of $H$ is 5-regular.
We claim that any connected component of $H$ containing an $S_5$ must contain only the vertices of $S_5$.
Otherwise if $w$ is not on $S_5$, then all the five quadruples $f_1, f_2, f_3, f_4, f_5$
on a vertex $w$ must intersect $S_5$. If there exists one quadruple of $f_1, f_2, f_3, f_4, f_5$ which is disjoint with $S_5$,
then by the connectivity condition we can find a $P_4$, a contradiction.
Moreover, if any $f_i$ intersects $S_5$ in one vertex then it with $f_j$ $(j\neq i)$ and two suitable quadruples of $S_5$
form a $P_4$, a contradiction. Denote by $N_2(w)$ $(N_3(w))$ the set of quadruples from $\{f_1, f_2, f_3, f_4, f_5\}$
intersecting $S_5$ in two (three) vertices. Let $|N_2(w)|=n_2$ and
$|N_3(w)|=n_3$. If $f_i, f_j\in N_3(w)$, then either $f_i$ and $f_j$ intersect the same three quadruples of
$S_5$ or $f_i\cup f_j$ intersect all the five quadruples of $S_5$.
Otherwise we can find a $P_4$ defined by the quadruples $f_i, f_j$ and two suitable quadruples
leading to a contradiction. Then we have $n_3\le 3$. And if $f_i, f_j\in N_2(w)$,
then $f_i$ and $f_j$ must intersect the same two quadruples of $S_5$.
Thus, $2\le n_2\le 3$. It follows from $n_2+n_3=5$ that either $n_2=2$ and $n_3=3$ or $n_2=3$ and $n_3=2$.
If $f_i\in N_2(w)$ and $f_j\in N_3(w)$, then we have that $f_i\cup f_j$ intersect all the five quadruples
of $S_5$. We consider a vertex $v\in f_i)$
with $v\neq w, f_i\in N_2(w)$ and $v\notin V(S_5)$. Note that the above argument for $w$ also holds for $v$.
Thus there is a quadruple $g_i$ containing $v$ intersecting $S_5$ in two quadruples.
Then we can find a $P_4$ defined by $f_i, g_i, f_j$ $(f_j\in N_3(w))$ and one suitable quadruple from $S_5$, a contradiction.
This proves that each connected component of $H$ is 5-regular on 16 vertices, $i.e.$ $S(2,4,16)$.
\end{proof}

Before we prove Theorem \ref{th1.2}, we give the following key lemma.
\begin{lemma}\label{lem1.2.1}
Let $H$ be a $E^+_4$-free graph and $e=\{u,v,w,z\}\in E(H)$ satisfy $D(e)\ge (7,7,6,6)$. Then, the vertex set of all edges sharing a vertex with $\{u,v,w,z\}$,
$$
S=\cup_{f\in E(H), f\cap \{u,v,w,z\}\neq \emptyset}  f,
$$
contains  exactly 22 vertices and all vertices in $S$ have degree at most $7$. The set of edges that contain at least one vertex in $S$,
$$
E_S=\{f:f\in E(G),f\cap S\neq \emptyset\},
$$
contains at most 25 edges, and all elements of $E_S$ are subsets of $S$. In other words, the subgraph $G[S]$ is a connected component of $G$.
\end{lemma}
\begin{proof}
Without loss of generality, assume that $d(z)\ge 7, d(w)\ge 7, d(v)\ge 6$ and $d(u)\ge 6$.  As $D(e)=(7,7,5,2)$ is impossible, we must have $d(w)=d(z)=7$.
Denote by $G(p)$ the set of all vertices distinct from $u,v,w,z$ that lie on the same edge with $p$ for any $p\in \{u,v,w,z\}$.
At first, we note that $G(w)=G(z)$. Otherwise, we assume that there exists an edge $e_1\neq e$ adjacent to $w$ contain some vertex not in $G(z)$. Then
at most two edges adjacent to $z$ other than $e$ contains a vertex in $e_1$, so at least four edges adjacent to $z$ are disjoint from $e_1$. Since $d(x)\ge 5$, we
can take an edge $e_2$ adjacent to $x$ that is disjoint from $e_1$, then take an edge $e_3$ adjacent to $z$ that is disjoint from $e_1$ and $e_2$.
Thus, $e,e_1,e_2,e_3$ forms an $E^+_4$, a contradiction.

Similarly, we have $G(u),G(v)\subset G(w)$. Since the proofs are similar,  it  suffices to show $G(u)\subset G(w)$. Suppose to the contrary that there exists
an edge $e_1\neq e$ adjacent to $u$ contain some vertex not in $G(w)$. Then, we can take an edge $e_3$ adjacent to $z$ that is disjoint from $e_1$. Among the six edges adjacent to $w$, at most three can intersect $e_3$, and at most two can intersect $e_1$. Thus, we can choose $e_2$ adjacent to $w$ that is disjoint from $e_1$ and $e_3$. Thus, $e,e_1,e_2,e_3$ forms an $E^+_4$, a contradiction.

Thus, $S\setminus \{u,v,w,z\}=G(w)=G(z)$ and $G(u), G(v)\subset G(w)$. Define $F$ as the set of all edges in $E(G)$ that adjacent to one of the vertices in $S$, but is disjoint from $\{u,v,w,z\}$.  It suffices to show that $F=\emptyset$.

Denote by $G(z)=\{a_1,b_1,c_1,a_2,b_2,c_2,a_3,b_3,c_3,a_4,b_4,c_4,a_5,b_5,c_5,a_6,b_6,c_6\}$ such that $\{z,a_1,b_1,c_1\},\{z,a_2,b_2,c_2\},\{z,a_3,b_3,c_3\},\{z,a_4,b_4,c_4\},\{z,a_5,b_5,c_5\},\{z,a_6,b_6,c_6\}$ are edges in $E(G)$.

{\bf $(i)$} Define an auxiliary bipartite graph $H=(X_H,Y_H,E_H)$ as follows; $X_H=\{e_i|w\in e_i\}, Y_H=\{e_j|z\in e_j\}, E_H=\{\{e_i,e_j\}|e_i\cap e_j\neq \emptyset\}$.
We claim that $H$ contains a $K_{3,3}$. We first choose $e\in G(u)$. Define $V_1=e\cap S, W_1=\{e_i|e_i\cap V_1\neq \emptyset\}\subset X_H\cup Y_H$.
Hence we have $|V_1|\le 3, |W_1|\le 6, |H-W_1|\ge 6$. Note that if there is no $E^+_4$ in $G$, $H-W_1$ has to be a complete bipartite graph. Since
$|H-W_1|\ge 6$ and two parts have the same order, there must exists a $K_{3,3}$ in $H-W_1$. So $H$ contains a $K_{3,3}$. Thus, we have $H=K_{3,3}\cup K_{3,3}$.

By symmetry we can assume that $\{z,a_1,b_1,c_1\},\{z,a_2,b_2,c_2\},\{z,a_3,b_3,c_3\}$ are in a $K_{3,3}$ and $\{z,a_4,b_4,c_4\},\{z,a_5,b_5,c_5\},\{z,a_6,b_6,c_6\}$ are in the other one. Further, we can assume that $\{w,a_1,a_2,a_3\},\{w,b_1,b_2,b_3\},\{w,c_1,c_2,c_3\},\{w,a_4,a_5,a_6\},\{w,b_4,b_5,b_6\},\{w,c_4,c_5,c_6\}\in E(G)$.

{\bf $(ii)$} Denote by $V_1=\{a_1,b_1,c_1,a_2,b_2,c_2,a_3,b_3,c_3\}$ and $V_2=\{a_4,b_4,c_4,a_5,b_5,c_5,a_6,b_6,c_6\}$. We have symmetry between $V_1$ and $V_2$, and symmetry inside $V_i, i=1,2$ as well. We claim that there exists no edge containing $v$ that contains exactly one vertex from one vertex set in $\{V_1,V_2\}$ and two vertices from another vertex set  in $\{V_1,V_2\}$. Otherwise, we let it be $\{v,a_1,a_4,b_5\}$ by symmetry. Then $\{z,a_1,b_1,c_1\},\{v,a_1,a_4,b_5\},\{z,a_6,b_6,c_6\},\{w,b_1,b_2,b_3\}$ form an $E^+_4$, a contradiction.

{\bf $(iii)$} Let $f\in F$. By symmetry we can assume that $a_1\in f$. Then we have that $a_2,a_3,b_1,c_1\notin f$. We claim that $f$ cannot contain exactly one vertex $a_1$ in $S$.
Otherwise, $\{z,a_1,b_1,c_1\},\{w,b_1,b_2,b_3\},\{z,a_4,b_4,c_4\}, f$ form an $E^+_4$, a contradiction. Then we claim that $b_2,c_2,b_3,c_3\notin f$. Suppose to the contrary that $b_2\in f$. Since $d(v)\ge 6$, there must exists an edge containing $v$ whose other three vertices are all  from $V_1-a_1$, say $e'$. Since at most two edges of $\{z,a_4,b_4,c_4\},\{z,a_5,b_5,c_5\},\{z,a_6,b_6,c_6\}$ intersect $f$, we can assume that $\{z,a_4,b_4,c_4\}\cap f=\emptyset$. Then $\{z,a_1,b_1,c_1\},e',\{z,a_4,b_4,c_4\},f$ form an $E^+_4$, a contradiction.

Therefore by symmetry we can also assume $a_4\in f$. Similarly, we have $b_5,c_5,b_6,c_6\notin f$. So $f$ have exactly two vertices $a_1,a_4$ in $S$. Then there must exists one edge $e''$ containing $v$ whose other three vertices are all from $V_1-a_1$. Therefore $\{z,a_1,b_1,c_1\},f,e'',\{z,a_5,b_5,c_5\}$ form an $E^+_4$, a contradiction.

This completes the proof.
\end{proof}

\begin{proof}[Proof of Theorem \ref{th1.2}]
For the lower bound of $ex^{lin}_{4}(n,E^{+}_4)$, we construct the linear quadruple system as
follows. Note that $E(STS(9))=12$ and there exist four perfect 3-matchings in $STS(9)$, where $STS(9)$ is a Steiner triple system on 9 vertices.
Consider perfect 3-matchings of $m$ disjoint copies of $STS(9)$. We extend each of the four perfect
3-matchings into $3m$ quadruples with four distinct new vertices $a, b, c, d$. This construction is a linear
quadruple system on $9m+4$ vertices with $12m$ quadruples. Note that it is also $E^{+}_4$-free.
To find an $E^{+}_4$ in our construction, we need first select one edge and then select three disjoint edges containing three distinct vertices from our first edge. Without loss of generality, we choose one edge $e$ containing $a$ as our first edge. and then we need choose at least two disjoint edges containing two distinct vertices from $e\setminus \{a\}$, which is impossible since
any two edges form two distinct perfect 3-matchings are intersecting. Thus, our construction is $E^{+}_4$-free.
Adjusting this construction according to divisibility, if $n-4\equiv 0,1,2 \pmod{9}$, then we let $
\varepsilon=0$, $\varepsilon=1$ if $n-4\equiv 3,4 \pmod{9}$, $\varepsilon=2$ if $n-4\equiv 5 \pmod{9}$,
 $\varepsilon=4$ if $n-4\equiv 6 \pmod{9}$, $\varepsilon=5$ if $n-4\equiv 7 \pmod{9}$, $\varepsilon=8$ if $n-4\equiv 8 \pmod{9}$.
 Thus, we have $ex_{4}^{lin}(n,E^{+}_4)\ge 12\lfloor\frac{n-4}{9}\rfloor+\varepsilon$.

Now let us to show the upper bound of $ex^{lin}_{4}(n,E^{+}_4)$. Let $H$ be any linear quadruple system on $n$ vertices.
Denote by $D(e)=\{d(a),d(b),d(c),d(d)\}$ the degree sequence of any edge $e=\{a,b,c,d\}\in E(H)$, where $d(a)\ge d(b)\ge d(c)\ge d(d)$. For any $f=\{u,v,w,z\}\in E(H)$ and $e=\{a,b,c,d\}\in E(H)$, we say $D(f)\ge D(e)$ if $d(u)\ge a, d(v)\ge b, d(w)\ge c$ and $d(z)\ge d$.  Suppose to the contrary that $H$ is the smallest linear $4$-graph of size more than $\frac{14(n-s)}{9}$. Fort any $v\in V(G)$, we define $f(v)=1$ if $d(v)\le 7$ and $f(v)=0$ othewise. We follow the observation from Tang et al. as follows.
$$
\sum_{e\in E(H)} \sum_{v\in V(H),v\in e} \frac{f(v)}{d(v)}=\sum_{v\in V(H)}\sum_{e\in E(H),v\in e} \frac{f(v)}{d(v)}=\sum_{v\in V(H)} f(v)=n-s.
$$
Since $|E(H)|>14(n-s)/9$, we have that there is an edge $e=\{u,v,w,z\}\in E(H)$ satisfying that
$$
\frac{f(u)}{d(u)}+\frac{f(v)}{d(v)}+\frac{f(w)}{d(w)}+\frac{f(z)}{d(z)}<\frac{9}{14}.
$$
Without loss of generality, we assume that $d(u)\le d(v)\le d(w)\le d(z)$. We note that $d(u)\ge 2, d(w)\ge 5$ and $d(z)\ge 7$, as otherwise the above inequality would be violated. Moreover, if $d(z)\ge 8$ then we can find a copy of $E^+_4$ by choosing an edge $e_1\neq e$ adjacent to $u$, an edge $e_2$ adjacent to $w$ that does not share a vertex with $e_1$, and an edge $e_3$ adjacent to $z$ that does not share a vertex with $e_1$ and $e_2$, contradiction. Therefore, $d(z)=7$ and the above inequality implies that $D(e)\ge (7,7,6,6)$.

Assume that $G-S$ be the graph obtained by deleting the vertices $S$ and the edges in $E_S$.  By Lemma \ref{lem1.2.1}, the graph $G-S$ have $n'=n-22$ vertices and at least $|E(G)|-25$ edges. Furthermore, the number of vertices in $G-S$ of degree at least 8 is exactly $s$. Therefore, we have
$$
|E(G-S)|\ge |E(G)|-25>\frac{14(n-s)}{9}-25>\frac{14(n'-s)}{9},
$$
a contradiction. This completes Theorem \ref{th1.2}.
\end{proof}

\section{Proof of Theorem \ref{th1.3}}
Let $H$ be a $P_k$-free linear quadruple system on $n$ vertices with more than $2.5kn$ quadruples.
We may assume that $H$ is a minimal counterexample (neither $k$ nor $n$ can be decreased).
Since for $k=2, 3, 4$ we have sharp results with bounds smaller than $2.5kn$ (see Proposition \ref{prop3} and Theorem \ref{th1.1}), we have that $n>4$ and $k\ge 5$. By the minimality of $H$ for $k$, $H$ contains a path $P=P_{k-1}$ with quadruples $e_i=\{x_{3i-2}, x_{3i-1}, x_{3i}, x_{3i+1}\}$
for $i\in [k-1]$. Also, we claim that each vertex of $H$ has degree at least $2.5k$,
otherwise deleting a vertex with a smaller degree we will get a smaller counterexample, a contradiction.

For convenience, we call the vertices $x_1, x_2, x_3$ the \textit{origin} vertices of $P$, the vertices $x_{3k-4}, x_{3k-3}, x_{3k-2}$ the \textit{terminus} vertices of $P$ and the other vertices of $P$ the \emph{internal} vertices of $P$. We call the vertices of $H$ not in $P$ the \emph{external} vertices.
For $i=1, 2, 3$, denote by $A_1(x_i)$ the set of quadruples in $H$ containing $x_i$, one internal vertex and two external vertices.
For $j= 3k-4, 3k-3, 3k-2$, denote by $B_1(x_j)$ the set of quadruples in $H$ containing $x_j$, one internal vertex and two external vertices.
Note that we have the following inequalities.
\begin{equation}\label{eq1}
|A_1(x_i)|\le 3(k-3), |B_1(x_j)|\le 3(k-3), \mbox{ where } 1\le i\le 3, 3k-4\le j\le 3k-2.
\end{equation}

Let $A_1=A_1(x_1)\cup A_1(x_2)\cup A_1(x_3)$ and $B_1=B_1(x_{3k-4})\cup B_1(x_{3k-3})\cup B_1(x_{3k-2})$.
A \emph{touching pair} is a pair of quadruples $f_1, f_2\in E(H)$ such that $f_1\in A_1$, $f_2\in B_1$ and their internal vertices are the same $x_{3i-1}$ or $x_{3i}$.
\begin{claim}\label{cl1}
There are no touching pairs in $E(H)$.
\end{claim}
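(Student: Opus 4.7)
The plan is to argue by contradiction: assume a touching pair $(f_1,f_2)$ exists and construct a linear 4-path with $k$ quadruples inside $H$, contradicting the $P_k$-freeness of $H$.

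First I would pin down the structure of the touching pair. Say $f_1\in A_1(x_s)$ with $s\in\{1,2,3\}$, $f_2\in B_1(x_t)$ with $t\in\{3k-4,3k-3,3k-2\}$, and both contain the common vertex $x_m\in\{x_{3i-1},x_{3i}\}$. Since $x_m$ must be an internal vertex of $P$, the index satisfies $i\in\{2,\ldots,k-2\}$, and $x_m$ lies in the unique edge $e_i$ of $P$ (it is not shared with $e_{i-1}$ or $e_{i+1}$). By the definitions of $A_1$ and $B_1$ the remaining two vertices of each $f_\ell$ are external, so $f_1\cap V(P)=\{x_s,x_m\}$ and $f_2\cap V(P)=\{x_t,x_m\}$. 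Since $H$ is linear, $f_1\cap f_2=\{x_m\}$.

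Then I would exhibit the sequence
$$e_{i-1},\, e_{i-2},\, \ldots,\, e_1,\, f_1,\, f_2,\, e_{k-1},\, e_{k-2},\, \ldots,\, e_{i+1},$$
which contains $(i-1)+2+(k-1-i)=k$ quadruples. The consecutive intersections are single vertices: inside $P$ each $e_{j+1}\cap e_j=\{x_{3j+1}\}$; $e_1$ meets $f_1$ at $x_s$; $f_1$ meets $f_2$ at $x_m$; $f_2$ meets $e_{k-1}$ at $x_t$. Disjointness of non-consecutive quadruples reduces to three observations: $e_j\cap e_{j'}=\emptyset$ whenever $|j-j'|\ge 2$, because $P$ is a linear 4-path; $f_1\cap e_j=\emptyset$ for $j\notin\{1,i\}$ because $f_1\cap V(P)\subseteq e_1\cup e_i$; and symmetrically $f_2\cap e_j=\emptyset$ for $j\notin\{k-1,i\}$. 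The edge $e_i$ is deliberately omitted from the path, which resolves the fact that $e_i,f_1,f_2$ all share $x_m$ (so at most two of these three can coexist in any linear path).

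The main obstacle is spotting the correct swap: $e_i$ must be deleted from $P$ and replaced by the pair $f_1,f_2$ glued together at $x_m$, which is exactly what converts the two external vertices of $f_1$ and $f_2$ from a liability into a genuine length-$k$ extension. Once this exchange is identified, the length count and the linearity checks are mechanical, and the existence of a $P_k$ in $H$ contradicts $P_k$-freeness, proving that no touching pair exists.
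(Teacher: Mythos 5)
Your proof is correct and is essentially the paper's own argument: the paper exhibits the same $k$-edge path (written in the reverse order $e_{i+1},\ldots,e_{k-1},f_2,f_1,e_1,\ldots,e_{i-1}$), obtained by deleting $e_i$ and splicing $f_1,f_2$ between the two halves of $P$ via the origin, the common internal vertex, and the terminus. Your write-up is in fact slightly more careful than the paper's, which only treats the representative case $x_s=x_1$, $x_t=x_{3k-2}$, $x_m=x_{3i}$.
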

\begin{proof}
Suppose to the contrary that $f_1=\{x_1, p, q, x_{3i}\}$ and $f_2=\{x_{3k-2}, s, t, x_{3i}\}$ is a touching pair, where $p, q, s, t$ are distinct.
We can find a $P_k$ defined by $e_{i+1}, e_{i+2}, \cdots, e_{k-1}, f_2, f_1, e_1, \cdots \linebreak , e_{i-1}$, a contradiction.
\end{proof}

Two quadruples $f_1, f_2\in E(H)$ are \textit{crossing} over two consecutive internal vertices $x_i, x_{i+1}$ if $f_1\in A_1, f_2\in B_1$
and $x_{i+1}\in f_1, x_i\in f_2$.
\begin{claim}\label{cl2}
If $f_1, f_2\in E(H)$ are crossing, then $f_1\cap f_2\neq \emptyset$ except for the case $i\equiv 2 \pmod{3}$.
\end{claim}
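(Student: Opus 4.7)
The plan is a proof by contradiction. Suppose $f_1,f_2$ are crossing over consecutive internal vertices $x_i,x_{i+1}$ with $i\not\equiv 2\pmod 3$, and that $f_1\cap f_2=\emptyset$; I will construct an explicit linear $P_k$ out of $f_1,f_2$ and a subsequence of $P$, contradicting $P_k$-freeness of $H$. Throughout, write $f_1=\{x_a,x_{i+1},u_1,u_2\}$ with $x_a\in\{x_1,x_2,x_3\}\subset e_1$, and $f_2=\{x_b,x_i,v_1,v_2\}$ with $x_b\in\{x_{3k-4},x_{3k-3},x_{3k-2}\}\subset e_{k-1}$, where $u_1,u_2,v_1,v_2$ are external. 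Recall that $e_j\cap e_{j+1}=\{x_{3j+1}\}$ and that each non-joint vertex $x_{3j-2},x_{3j-1},x_{3j}$ belongs to $e_j$ alone.

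If $i\equiv 1\pmod 3$, write $i=3m+1$ with $1\le m\le k-3$; then $x_i=x_{3m+1}$ is the joint vertex of $e_m$ and $e_{m+1}$, while $x_{i+1}=x_{3m+2}$ lies only in $e_{m+1}$. The candidate path is
\[
f_1,\; e_1,\; e_2,\; \ldots,\; e_m,\; f_2,\; e_{k-1},\; e_{k-2},\; \ldots,\; e_{m+2},
\]
a list of $1+m+1+(k-m-2)=k$ quadruples. Consecutive pairs share exactly $\{x_a\}$, the joint vertices of the left segment, $\{x_{3m+1}\}$, $\{x_b\}$, and the joint vertices of the right segment, respectively. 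For non-consecutive pairs I will use that $x_a$ lies only in $e_1$, $x_b$ only in $e_{k-1}$, $x_{i+1}$ only in the skipped edge $e_{m+1}$, $x_i$ only in $e_m$ and $e_{m+1}$, the vertices $u_i,v_j$ in no $e_\ell$, and the assumption $f_1\cap f_2=\emptyset$; these together yield a linear $P_k$, the desired contradiction.

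If $i\equiv 0\pmod 3$, write $i=3m$ with $2\le m\le k-2$; then $x_i=x_{3m}$ lies only in $e_m$, while $x_{i+1}=x_{3m+1}$ is the joint of $e_m$ and $e_{m+1}$. I use the mirror construction
\[
f_2,\; e_{k-1},\; e_{k-2},\; \ldots,\; e_{m+1},\; f_1,\; e_1,\; e_2,\; \ldots,\; e_{m-1},
\]
again $k$ quadruples, with consecutive intersections $\{x_b\}$, joint vertices, $\{x_{3m+1}\}$, $\{x_a\}$, and joint vertices; the same bookkeeping as before delivers linearity and hence a $P_k$, a contradiction.

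The case $i\equiv 2\pmod 3$ is genuinely exceptional, and this is why the claim excludes it. Writing $i=3m+2$, both $x_i=x_{3m+2}$ and $x_{i+1}=x_{3m+3}$ lie in the single edge $e_{m+1}$ of $P$ (and in no other $e_j$); omitting $e_{m+1}$ disconnects $P$ into two halves that neither $f_1$ nor $f_2$ can bridge, while including $e_{m+1}$ forces $f_1$ or $f_2$ to sit non-consecutively next to it (contradicting linearity), so any valid linear subpath has at most $k-1$ edges. The main technical obstacle, then, is not invention but careful bookkeeping: I must verify that every non-consecutive pair in each of the two candidate sequences is genuinely disjoint, including the boundary parameters $m=1$ and $m=k-3$ in the first case and $m=2$ and $m=k-2$ in the second. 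Each such check reduces to tracing which $e_j$ can contain $x_a,x_b,x_i,x_{i+1}$, so none is deep individually, but they must all be carried out.
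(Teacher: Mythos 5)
Your two explicit path constructions coincide exactly with the ones in the paper's proof (for $i=3m+1$: $f_1,e_1,\dots,e_m,f_2,e_{k-1},\dots,e_{m+2}$; for $i=3m$: $f_2,e_{k-1},\dots,e_{m+1},f_1,e_1,\dots,e_{m-1}$), and your bookkeeping of which $e_j$ contain $x_a$, $x_b$, $x_i$, $x_{i+1}$ is sound. This is essentially the same proof, carried out with more verification detail than the paper itself provides.
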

\begin{proof}
Suppose that $f_1=\{x_1, p, q, x_{i+1}\}$ and $f_2=\{x_{3k-2}, s, t, x_i\}$, where $p, q, s, t$ are distinct.
If $i\equiv 0 \pmod{3}$, then we can find a $P_k$ defined by $f_2, e_{k-1}, e_{k-2}, \cdots, e_{\frac{i}{3}+1} , f_1, e_1, e_2, \cdots, e_{\frac{i}{3}-1}$,
a contradiction. If $i\equiv 1 \pmod{3}$, then we can find a $P_k$ defined by $f_1, e_1, e_2, \cdots, e_{\frac{i-1}{3}}, f_2, e_{k-1}, \linebreak e_{k-2}, \cdots, e_{\frac{i+5}{3}}$,
a contradiction.
\end{proof}

\begin{claim}\label{cl3}
Assume that $f_1, f_2\in E(H)$ are crossing over the internal vertices $x_{3i}, x_{3i+1}$ and $x_a\in f_1$
is the origin vertex, $x_b\in f_2$ is the terminus vertex. Then there exist one original vertex $v\neq x_a$ and one terminus vertex $w\neq x_b$
such that $\{v, x_{3i+1}\}$ is not covered by any quadruple of $A_1$ and $\{w, x_{3i}\}$ is not covered by any quadruple of $B_1$.
Furthermore, if there exist exactly one origin vertex $v\in \{x_1,x_2,x_3\}-x_a$ and one terminus vertex $w\in \{x_{3k-4},x_{3k-3},x_{3k-2}\}-x_b$
such that $\{v, x_{3i+1}\}$ is not covered by any quadruple of $A_1$ and $\{w, x_{3i}\}$ is not covered by any quadruple of $B_1$, then there exists one terminus vertex $u$ such that $\{u, x_{3i-1}\}$ is not covered by any quadruple of $B_1$.
\end{claim}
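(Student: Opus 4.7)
My plan is to prove both statements by contradiction, using Claim~3.2 (which applies here because the crossing is indexed by $x_{3i},x_{3i+1}$ with $3i\not\equiv 2\pmod 3$, so every $A_1$-vs-$B_1$ crossing at this location must share a vertex) combined with linearity-based pigeonhole arguments on the two external slots of an $A_1$- or $B_1$-quadruple.

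For the first assertion, I would assume toward contradiction that both origin vertices $v_1,v_2\in\{x_1,x_2,x_3\}\setminus\{x_a\}$ admit quadruples $g_1\in A_1(v_1),g_2\in A_1(v_2)$ through $x_{3i+1}$. By linearity, $f_1,g_1,g_2$ pairwise share only $x_{3i+1}$, so the three triples $f_1\setminus\{x_{3i+1}\},\,g_1\setminus\{x_{3i+1}\},\,g_2\setminus\{x_{3i+1}\}$ are pairwise disjoint. Claim~3.2 applied to each of the crossing pairs $(f_1,f_2),(g_1,f_2),(g_2,f_2)$ forces $f_2=\{x_b,x_{3i},z_1,z_2\}$ to meet all three. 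But the terminus vertex $x_b$ lies in no $A_1$-quadruple, and the internal vertex $x_{3i}$ lies in none of $f_1,g_1,g_2$ (each of which has $x_{3i+1}$ as its unique internal entry), so only the two externals $z_1,z_2$ remain to intersect three pairwise disjoint triples---a contradiction. The symmetric argument, swapping the roles of origin/terminus and of $A_1/B_1$, produces the required $w$.

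For the second assertion, the ``exactly one'' hypothesis provides a quadruple $g\in A_1(v_2)$ with $x_{3i+1}\in g$, where $v_2$ is the unique non-$x_a$ origin for which the pair is covered. Suppose to the contrary that every terminus vertex $u$ carries a quadruple $h_u\in B_1(u)$ with $x_{3i-1}\in h_u$; linearity forces $h_{u_1},h_{u_2},h_{u_3}$ to share pairwise only $\{x_{3i-1}\}$. The non-external entries $v_2,x_{3i+1}$ of $g$ cannot lie in any $h_{u_j}$ (whose non-external entries are a terminus vertex and $x_{3i-1}$), so $g\cap h_{u_j}$ is confined to $g$'s two externals; if $g$ met all three $h_{u_j}$'s, one external would lie in two of them and thus equal $x_{3i-1}$ by linearity, which is impossible for an external vertex. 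Hence some $j^*$ satisfies $g\cap h_{u_{j^*}}=\emptyset$, and I would then exhibit the forbidden $P_k$ explicitly as
$$e_{i-1},\,e_{i-2},\,\ldots,\,e_1,\,g,\,e_{i+1},\,e_{i+2},\,\ldots,\,e_{k-1},\,h_{u_{j^*}},$$
a sequence of $(i-1)+1+(k-1-i)+1=k$ quadruples with consecutive incidences $v_2$ (between $e_1$ and $g$), $x_{3i+1}$ (between $g$ and $e_{i+1}$), and $u_{j^*}$ (between $e_{k-1}$ and $h_{u_{j^*}}$). Linearity holds because omitting $e_i$ removes exactly the two edges-in-$P$ that would have coupled $g$ (via $x_{3i+1}$) and $h_{u_{j^*}}$ (via $x_{3i-1}$) back into non-consecutive conflicts; with $e_i$ gone, $g$ meets $P$ only in $e_1$ and $e_{i+1}$ (both consecutive to $g$ in the new path) and $h_{u_{j^*}}$ meets $P$ only in $e_{k-1}$ (consecutive), while all other non-consecutive pairs inherit disjointness from $P$. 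This $P_k\subset H$ contradicts $P_k$-freeness.

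The main obstacle is the linearity bookkeeping for the exhibited path---specifically the verification that removing $e_i$ really eliminates all non-consecutive intersections---while the conceptual heart is the pigeonhole observation on $g$'s two externals, which is exactly where the ``exactly one'' hypothesis is used, since it guarantees the existence of the critical middle quadruple $g$ without which the construction collapses.
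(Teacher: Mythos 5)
Your proposal is correct, and while the first assertion is handled essentially as in the paper, the second is proved by a genuinely different and cleaner route. For the first part, both you and the paper invoke Claim~\ref{cl2} to force every hypothetical covering quadruple at $x_{3i+1}$ to meet $f_2$, and then use linearity to reach a contradiction; the paper concludes that two such quadruples would both be forced onto the external vertex $t$ of $f_2$ and hence share two vertices, whereas you pigeonhole the two externals of $f_2$ against three pairwise disjoint triples --- the same computation packaged differently. For the second part, the paper first pins down the structure of the covering quadruples $g_1=\{v',t,p,x_{3i+1}\}$ and $g_2=\{w',s,p,x_{3i}\}$ (forced to pick up $t\in f_2$, $s\in f_1$, and a common external $p$) and then asserts that a single $B_1$-quadruple through $x_{3i-1}$ has its externals disjoint from $\{l,s\}$ or from $\{t,p\}$; as stated for one quadruple this dichotomy is not obviously justified (one external could lie in each pair), and it really needs the pigeonhole over all three terminus vertices to go through. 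You bypass the structural analysis entirely: if all three pairs $\{u,x_{3i-1}\}$ were covered, the three covering quadruples would pairwise meet only in $x_{3i-1}$, so their external pairs are pairwise disjoint, and the two externals of a single $A_1$-quadruple through $x_{3i+1}$ cannot meet all three; the surviving disjoint quadruple closes up into the $P_k$ you exhibit, whose linearity bookkeeping is correct (omitting $e_i$ removes the only edge of $P$ containing $x_{3i-1}$ and the only non-consecutive occurrence of $x_{3i+1}$, and all indices are in range since $2\le i\le k-2$). What your approach buys is robustness and brevity; what it reveals is that the ``exactly one'' hypothesis is not actually needed, since $f_1$ itself could play the role of $g$, so your argument proves a slightly stronger statement than claimed.
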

\begin{proof}
By Claim \ref{cl2}, we have that $f_1=\{x_a, l, s, x_{3i+1}\}, f_2=\{x_b, l, t, x_{3i}\}$ and $l, s, t$ are distinct.
If $\{x_{a'}, x_{3i+1}\}$ and $\{x_{a''}, x_{3i+1}\}$ are covered by two quadruples $g$ and $h$ respectively,
where $\{x_a, x_{a'}, x_{a''}\}=\{x_1,x_2,x_3\}$. Then by Claim \ref{cl2} and the linearity of $H$, we have $|h\cap g|=2$,
a contradiction. Thus, there exists one origin vertex $v\neq x_a$ such that $\{v, x_{3i+1}\}$ is not covered by any quadruple of $A_1$.
The proof of the second statement is similar. Assume that there exist exactly one origin vertex $v\in \{x_1,x_2,x_3\}-x_a$
and one terminus vertex $w\in \{x_{3k-4},x_{3k-3},x_{3k-2}\}-x_b$ such that $\{v, x_{3i+1}\}$ is not covered by
any quadruple of $A_1$ and $\{w, x_{3i}\}$ is not covered by any quadruple of $B_1$. Without loss of generality, we assume
$g_1=\{v, t, p, x_{3i+1}\}\in E(H), g_2=\{w, s, p, x_{3i}\}\in E(H)$, where $l, s, t, p$ are distinct.
Then there must be one terminus vertex $u$ such that $\{u, x_{3i-1}\}$ is not covered by any quadruples of $B_1$.
Otherwise if there exists one terminus vertex $u$ such that $g_3=\{u, a, b, x_{3i-1}\}\in E(H)$, then we have that either $a,b\notin \{l,s\}$
or $a,b\notin \{t,p\}$ holds. Assume that $a,b\notin \{l,s\}$. Then we can find a $P_k$ defined by $g_3, e_{k-1}, \cdots, e_{i+1}, f_1, e_1, \cdots, e_{i-1}$,
a contradiction.
\end{proof}

\begin{claim}
There exist an origin vertex $x_a$ and a terminus vertex $x_b$ of $P$ such that $|A_1(x_a)|+|B_1(x_b)|\le 4(k-3)$.
\end{claim}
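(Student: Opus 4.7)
The plan is to prove the global bound $|A_1|+|B_1|\le 12(k-3)$ and then extract the claim by averaging the nine quantities $|A_1(x_a)|+|B_1(x_b)|$ over the three origin and three terminus vertices of $P$. Since $\sum_{a,b}\bigl(|A_1(x_a)|+|B_1(x_b)|\bigr)=3\bigl(|A_1|+|B_1|\bigr)\le 36(k-3)$, at least one of the nine pairs must then satisfy $|A_1(x_a)|+|B_1(x_b)|\le 4(k-3)$.

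For the global bound, I would index contributions by the internal vertex of each quadruple. For every internal vertex $y$ of $P$, let $a(y)$ and $b(y)$ denote the number of quadruples in $A_1$ and $B_1$ respectively whose internal vertex is $y$. Linearity of $H$ gives $a(y)\le 3$ and $b(y)\le 3$, since each of the three origins (terminuses) can pair with $y$ in at most one quadruple beyond those already on $P$.

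Next I would split the internal vertices of $P$ into two types. The \emph{middle} internal vertices, of the form $x_{3i-1}$ or $x_{3i}$, number $2(k-3)$; the \emph{boundary} internal vertices $x_{3i+1}$ with $i\in[1,k-2]$ number $k-2$. For every middle $y$, Claim \ref{cl1} forbids a touching pair with common internal vertex $y$, which forces $a(y)b(y)=0$ and hence $a(y)+b(y)\le 3$. For a boundary $y\in\{x_7,x_{10},\ldots,x_{3k-8}\}$, Claim \ref{cl1} does not apply and only the crude bound $a(y)+b(y)\le 6$ is available. The two extreme boundary vertices $x_4$ and $x_{3k-5}$ are special: the pair $\{x_a,x_4\}$ lies inside $e_1$ for every origin $x_a$, so $a(x_4)=0$, and symmetrically $b(x_{3k-5})=0$, giving $a(y)+b(y)\le 3$ at each of these two. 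Summing over all internal vertices,
$$|A_1|+|B_1|=\sum_{y}\bigl(a(y)+b(y)\bigr)\le 3\cdot 2(k-3)+\bigl(3+6(k-4)+3\bigr)=12(k-3),$$
and the averaging argument above finishes the proof.

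The main conceptual point is the reduction from the generic estimate $6$ per internal vertex down to $3$ at each of the $2(k-3)$ middle vertices via Claim \ref{cl1}; this is exactly what cuts the trivial sum in half and produces a bound that matches $4(k-3)$ after averaging. Claims \ref{cl2} and \ref{cl3} do not appear to be needed for this claim; the only subtlety is the careful bookkeeping at the two endpoint boundary vertices $x_4$ and $x_{3k-5}$, whose incidences with $e_1$ and $e_{k-1}$ save the otherwise critical $+6$ terms at either end.
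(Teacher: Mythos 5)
Your proof is correct, and while it follows the paper's overall strategy --- reduce the claim, by averaging over the three origin and three terminus vertices, to the global bound $|A_1|+|B_1|\le 12(k-3)$ --- it establishes that bound by a genuinely different and cleaner decomposition. The paper starts from the $18(k-3)$ admissible origin--internal and terminus--internal pairs implicit in inequality (\ref{eq1}) and then, for each index $i\in\{2,\dots,k-2\}$, runs a case analysis on the quadruples through $x_{3i}$ to exhibit at least six ``missing'' pairs, invoking not only Claim \ref{cl1} but also Claims \ref{cl2} and \ref{cl3}. You instead sum $a(y)+b(y)$ directly over internal vertices $y$: Claim \ref{cl1} alone forces $a(y)b(y)=0$, hence $a(y)+b(y)\le 3$, at each of the $2(k-3)$ middle vertices; linearity gives $a(x_4)=b(x_{3k-5})=0$ because the relevant pairs already lie in $e_1$ and $e_{k-1}$; and the trivial bound $6$ is used at the remaining $k-4$ connector vertices. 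The totals agree, $6(k-3)+6+6(k-4)=12(k-3)$, and the averaging step is routine. Your version is shorter, dispenses with Claims \ref{cl2} and \ref{cl3} entirely for this claim, and avoids the bookkeeping needed to check that the paper's missing pairs are counted without overlap across different indices $i$; the only thing it forgoes is the finer structural information about crossing pairs that the paper's auxiliary claims record, which, as you note, is not needed here.
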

\begin{proof}
By the inequality (\ref{eq1}), we have $|A_1|+|B_1|\le 18(k-3)$. Since $|A_1|+|B_1|=|A_1(x_1)+A_1(x_2)+A_1(x_3)+B_1(x_{3k-4})+B_1(x_{3k-3})+B_1(x_{3k-2})|$,
it is enough for us to prove that $|A_1|+|B_1|\le 12(k-3)$. We consider the number of "missing quadruples" from $A_1\cup B_1$ as follows.
For every fixed $i\in \{2, 3, \cdots, k-2\}$, we consider two cases.

{\bf Case 1.} There is no quadruple in $A_1\cup B_1$ containing $x_{3i}$. Then the pairs $\{x_{3i}, x_a\}$ and $\{x_{3i}, x_b\}$ are not covered
by any quadruple of $A_1\cup B_1$ for $x_a\in \{x_1, x_2, x_3\}$ and $x_b\in \{x_{3k-4}, x_{3k-3}, x_{3k-2}\}$. Thus we have six missing
quadruples.

{\bf Case 2.} There is a quadruple $e\in A_1\cup B_1$ containing $x_{3i}$.

{\bf Case 2.1} There is a quadruple $e'\in A_1\cup B_1$ such that $e, e'$ is crossing over $x_{3i}, x^*$. If $e\in A_1$, then we have $x^*=x_{3i-1}$.
By Claim \ref{cl1}, $\{x_{3i}, x_b\}$ is not covered by any quadruple of $B_1$ for any fixed $x_b\in \{x_{3k-4}, x_{3k-3}, x_{3k-2}\}$
and $\{x_{3i-1}, x_a\}$ is not covered by any quadruple of $A_1$ for any fixed $x_a\in \{x_{1}, x_{2}, x_{3}\}$. Thus we have six missing quadruples.
If $e\in B_1$, then $x^*=x_{3i+1}$. By Claim \ref{cl3}, we have at least three missing quadruples.
By Claim \ref{cl1}, $\{x_1, x_{3i}\}$, $\{x_2, x_{3i}\}$ and $\{x_3, x_{3i}\}$ are not covered by any
quadruples of $A_1$. Thus we have at least six missing quadruples.

{\bf Case 2.2.} There exists no $e'\in A_1\cup B_1$ such that $e, e'$ is crossing over $x_{3i}, x^*$. Without loss of generality, assume that $e\in A_1$, then $\{x^*, x_{3k-4}\}$, $\{x^*, x_{3k-3}\}$ and $\{x^*, x_{3k-2}\}$ are not covered by any quadruple of $B_1$. By Claim \ref{cl1}, we have $\{x_{3i}, x_{3k-4}\}$, $\{x_{3i}, x_{3k-3}\}$ and $\{x_{3i}, x_{3k-2}\}$ are not covered by any quadruple of $A_1$. Thus we have at least six missing quadruples.

We conclude that among all cases we have at least six missing quadruples. Thus altogether we have at least $6(k-3)$ missing quadruples in $A_1\cup B_1$. Then $|A_1|+|B_1|\le 12(k-3)$.
\end{proof}

Without loss of generality, assume that $|A_1(x_1)|+|B_1(x_{3k-2})|\le 4(k-3)$.
For $i\in \{2, 3\}$, we use $A_i(x_1)$ and $B_i(x_{3k-2})$ to denote the set of quadruples in $H$
containing $x_1$ and intersecting $P-x_1$ in $i$ vertices and the set of quadruples in $H$
containing $x_{3k-2}$ and intersecting $P-x_{3k-2}$ in $i$ vertices, respectively. Since $H$ is linear, We have
$$3|A_3(x_1)|+2|A_2(x_1)|+|A_1(x_1)|\le 3k-3 \mbox{ , } 3|B_3(x_{3k-2})|+2|B_2(x_{3k-2})|+|B_1(x_{3k-2})|\le 3k-3.$$
Adding above two inequalities to the inequality $|A_1(x_1)|+|B_1(x_{3k-2})|\le 4(k-3)$ we obtain
$$(3|A_3(x_1)|+2|A_2(x_1)|+2|A_1(x_1)|)+(3|B_3(x_{3k-2})|+2|B_2(x_{3k-2})|+2|B_1(x_{3k-2})|)\le 10k-18.$$
Then we have either $3|A_3(x_1)|+2|A_2(x_1)|+2|A_1(x_1)|\le 5k-9$
or $3|B_3(x_{3k-2})|+2|B_2(x_{3k-2})|\linebreak +2|B_1(x_{3k-2})|\le 5k-9$.
It follows that either $\frac{3}{2}|A_3(x_1)|+|A_2(x_1)|+|A_1(x_1)|\le 2.5k-4.5$ or $\frac{3}{2}|B_3(x_{3k-2})|+|B_2(x_{3k-2})|+|B_1(x_{3k-2})|\le 2.5k-4.5$.
Thus we have either $d_H(x_1)=|A_3(x_1)|+|A_2(x_1)|+|A_1(x_1)|\le 2.5k-4.5$ or $d_H(x_{3k-2})=|B_3(x_{3k-2})|+|B_2(x_{3k-2})|+|B_1(x_{3k-2})|\le 2.5k-4.5$,
contradicting the minimum degree condition in a minimal counterexample. \qed

\section{Proof of Theorem \ref{th1.4}}
Before proving Theorem \ref{th1.4}, we first introduce the notion of $t-(m,k,\lambda)$ packing and some useful lemmas.

A $t-(m,k,\lambda)$ \emph{packing} is a pair $(V,\mathcal{B})$ where $V$ is a vertex set on $m$ vertices and $\mathcal{B}$
is a collection of $k$-subsets (called \emph{blocks}) of $V$ such that each $t$-subset of $V$ is contained in \emph{at most} $\lambda$ blocks of $\mathcal{B}$. The \emph{packing number} $D_\lambda(m,k,t)$ is the largest possible number of blocks in a $t-(m,k,\lambda)$ packing.
A $t-(m,k,\lambda)$ packing is called \emph{optimal} if $|\mathcal{B}|=D_\lambda(m,k,t)$.
Thus, an $S(2,4,m)$ is an optimal $2-(m,4,1)$ packing with $D_1(m,4,2)=\frac{m(m-1)}{12}$.
\begin{lemma}[\cite{Br}]
If $m\notin \{8,9,10,11,17,19\}$, then
\begin{description}
\item[$(i)$] for $m\equiv 0,3 \pmod{12}$ an optimal $2-(m,4,1)$ packing is a linear quadruple system whose quadruples cover all pairs of $m$ vertices apart from $m$ pairs which form a copy of $\frac{m}{3}K_3$ (the $\frac{m}{3}$ disjoint union of $K_3$);

\item[$(ii)$] for $m\equiv 2,8 \pmod{12}$ an optimal $2-(m,4,1)$ packing is a linear quadruple system whose quadruples cover all pairs of $m$ vertices apart from $\frac{m}{2}$ pairs which form a copy of $\frac{m}{2} K_2$ (the $\frac{m}{2}$ disjoint union of $K_2$);

\item[$(iii)$] for $m\equiv 5,11 \pmod{12}$ an optimal $2-(m,4,1)$ packing is a linear quadruple system whose quadruples cover all pairs of $m$ vertices apart from $\frac{m+3}{2}$ pairs which form a copy of $K_{1,4}\cup \frac{m-5}{2}K_2$ (the disjoint union of $K_{1,4}$ and $\frac{m-5}{2}$ disjoint edges);

\item[$(iv)$] for $m\equiv 7,10 \pmod{12}$ an optimal $2-(m,4,1)$ packing is a linear quadruple system whose quadruples cover all pairs of $m$ vertices apart from $9$ pairs which form a copy of $K_{3,3}$;

\item[$(v)$] for $m\equiv 6,9 \pmod{12}$ an optimal $2-(m,4,1)$ packing is a linear quadruple system whose quadruples cover all pairs of $m$ vertices apart from $m+3$ pairs which form a copy of $(K_{6}\setminus K_4)\cup \frac{m-6}{3}K_3$ (the disjoint union of $K_{6}\setminus K_4$ and $\frac{m-6}{3}$ disjoint triangles, where $K_{6}\setminus K_4$ denotes the graph obtained from $K_6$ by deleting the edges from some $K_4$).
\end{description}
\end{lemma}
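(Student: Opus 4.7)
The plan is to combine a divisibility-based lower bound on the leave with explicit constructions matching it in each residue class. Fix an arbitrary $2$-$(m,4,1)$ packing $(V,\mathcal{B})$. For any vertex $v$, the blocks through $v$ must be pairwise disjoint outside $\{v\}$ by linearity, and each contributes three new neighbours at $v$; hence $3d(v)\le m-1$ and $d(v)\le\lfloor(m-1)/3\rfloor$. The leave degree $\ell(v)=(m-1)-3d(v)$ therefore satisfies $\ell(v)\ge(m-1)\bmod 3$ at every vertex, and summing together with $\sum_v d(v)=4|\mathcal{B}|$ (plus the integrality of $|\mathcal{B}|$ and the parity of $\sum_v\ell(v)$) pins down the minimum possible number of leave edges in each of the six nontrivial residue classes: $m$ for $m\equiv 0,3\pmod{12}$, $m/2$ for $m\equiv 2,8$, $(m+3)/2$ for $m\equiv 5,11$, $9$ for $m\equiv 7,10$, and $m+3$ for $m\equiv 6,9$.

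Next I would pin down the precise shape of the leave graph $L$ of an optimal packing. In each class the counting above forces the degree sequence of $L$: for $m\equiv 0,3\pmod{12}$ every vertex has leave degree exactly $2$, so $L$ is $2$-regular; for $m\equiv 2,8$ it is a perfect matching; for $m\equiv 5,11$ one vertex has leave degree $4$ and the rest degree $1$; for $m\equiv 7,10$ the leave is concentrated on a small ``bad'' vertex set of size $6$; and for $m\equiv 6,9$ the degrees split as a mix of $2$'s with one extra $K_6\setminus K_4$ component. To show the leave is actually $\tfrac{m}{3}K_3$ (rather than, say, a single Hamilton cycle) I would use a local switching argument: any cycle of length at least $4$ in $L$ would force a configuration of four blocks whose pairs could be recombined so as to strictly decrease $|L|$, contradicting optimality. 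Analogous arguments, tailored to the predicted leaves $\tfrac{m}{2}K_2$, $K_{1,4}\cup\tfrac{m-5}{2}K_2$, $K_{3,3}$, and $(K_6\setminus K_4)\cup\tfrac{m-6}{3}K_3$ in the remaining classes, would rule out every other graph with the correct degree sequence.

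For the matching upper bound on $D_1(m,4,2)$ I would invoke Hanani's recursive machinery for $S(2,4,n)$: the standard $\{4\}$-GDD constructions of types $3^{t}$, $6^{t}$, and $12^{t}$, combined with truncation and filling in holes by smaller $S(2,4,n)$ or by optimal packings one congruence class down, produce packings with precisely the prescribed leaves for every sufficiently large $m$ in each class. The six exceptional values $m\in\{8,9,10,11,17,19\}$ are exactly those for which no suitable recursion base or ingredient design is available and the extremal leaves look different; they are verified separately by ad hoc or computer-aided constructions and are therefore excluded from the statement.

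The principal difficulty lies in the structural step: the counting only controls the degree sequence of $L$, and a priori many graphs with the same degree sequence could arise. Converting ``correct degree sequence'' into ``exactly the prescribed leave'' requires a careful interplay between the block-intersection structure, Fisher-type inequalities for the pair-covering matrix, and (in the non-regular cases) a close analysis of how blocks through the few high-degree leave vertices must be arranged. This is the content of Brouwer's 1979 paper and is the step I would expect to occupy the bulk of the work.
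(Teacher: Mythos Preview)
The paper does not prove this lemma at all: it is quoted verbatim from Brouwer~\cite{Br} and used only as input to the lower bound on $g(n,k)$. So there is no ``paper's own proof'' to compare against; the relevant question is whether your sketch would reproduce Brouwer's theorem.

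Your counting step is fine and does give the correct minimum leave size in each residue class. The problem is the structural step. You read the lemma as asserting that \emph{every} optimal packing has the stated leave, and you propose a switching argument to show that, e.g., a $2$-regular leave on $m$ vertices must decompose into triangles rather than longer cycles. That reading is too strong, and the switching claim is not true in general: for $m\equiv 0,3\pmod{12}$ there do exist optimal $2$-$(m,4,1)$ packings whose leave is a union of longer cycles, so no local move can ``strictly decrease $|L|$'' starting from such a packing (its $|L|$ is already minimal). The same remark applies to the $m\equiv 6,9$ case. Brouwer's theorem is an \emph{existence} statement: for each $m$ outside the six exceptions there exists an optimal packing whose leave has the named isomorphism type. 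The content is entirely constructive, via GDDs, truncation and filling, exactly the machinery you mention in your third paragraph; there is no uniqueness component and no need to rule out alternative leaves. So drop the second paragraph of your plan and expand the third: that is where all of the actual work lies, and it is what the paper is citing.
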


\begin{lemma}[\cite{Br,St}]\label{le4.2}
\begin{equation*}
\left\lfloor\frac{m}{4}\left\lfloor\frac{m-1}{3}\right\rfloor\right\rfloor-D_1(m,4,2)=
\begin{cases}
1, & \mbox{if } m\equiv 7,10 \pmod{12},m\neq 10,19\mbox{ or }m=9,17;\\
2, & \mbox{if } m=8,10,11; \\
3, & \mbox{if } m=19; \\
0, & \mbox{otherwise}.
\end{cases}
\end{equation*}
\end{lemma}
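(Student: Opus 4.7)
The plan is a direct case analysis on the residue of $m$ modulo $12$. For the generic residue classes (i.e., $m\notin\{8,9,10,11,17,19\}$), Lemma 4.1 describes an optimal $2$-$(m,4,1)$ packing as a linear quadruple system whose blocks cover all pairs of $V$ except the edge set $L$ of a specified small graph; since each block accounts for $\binom{4}{2}=6$ covered pairs, this gives
\begin{equation*}
D_1(m,4,2)=\frac{\binom{m}{2}-e(L)}{6}
\end{equation*}
and I can read off $e(L)$ from Lemma 4.1 to obtain a closed-form polynomial for $D_1(m,4,2)$ in each generic class (for example $e(L)=m$ when $m\equiv 0,3\pmod{12}$, and $e(L)=9$ when $m\equiv 7,10\pmod{12}$).

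In parallel I would expand $U(m):=\lfloor\frac{m}{4}\lfloor\frac{m-1}{3}\rfloor\rfloor$ by writing $m=12s+r$ with $r\in\{0,1,\ldots,11\}$; the inner floor $\lfloor(m-1)/3\rfloor$ takes only three values according to $m\bmod 3$, and then the outer floor reduces to the twelve routine cases of $r$. Subtracting the closed form for $D_1(m,4,2)$ from $U(m)$ yields the entry on the right-hand side of the identity in each generic row. For a representative calculation, when $m=12s+7$ one finds $\lfloor(m-1)/3\rfloor=4s+2$ and $U(m)=12s^2+13s+3$, while $D_1(m,4,2)=((12s+7)(12s+6)-18)/12=12s^2+13s+2$, giving the gap of $1$ claimed for $m\equiv 7\pmod{12}$ (with $m\neq 19$).

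Finally, the six exceptional values $m\in\{8,9,10,11,17,19\}$ fall outside the hypotheses of Lemma 4.1, so I would handle them individually using the exact values of $D_1(m,4,2)$ tabulated in \cite{Br,St} (for instance, $D_1(10,4,2)=5$ against $U(10)=7$ yields gap $2$; and $D_1(19,4,2)=25$ against $U(19)=28$ yields gap $3$). The main obstacle is purely bookkeeping: tracking the non-integrality of $m/4$ and $(m-1)/3$ across the twelve generic residues, being careful that the congruence classes $m\equiv 7,10\pmod{12}$ also contain two of the excluded values which must be removed from the generic row, and inspecting the six small exceptional packings; no new conceptual idea beyond this careful arithmetic is required.
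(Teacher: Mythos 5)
Your proposal is correct, but note that the paper offers no proof of this statement at all: Lemma \ref{le4.2} is simply imported from Brouwer \cite{Br} and Stinson \cite{St} as a known determination of the packing numbers $D_1(m,4,2)$, so there is no in-paper argument to compare against. Your derivation is a legitimate and self-contained way to verify the identity given the preceding lemma: for the generic residues the leave graph $L$ from Lemma 4.1 gives $D_1(m,4,2)=\bigl(\binom{m}{2}-e(L)\bigr)/6$ with $e(L)=m$, $\frac{m}{2}$, $\frac{m+3}{2}$, $9$, $m+3$ in the respective classes, and the twelve-case expansion of $U(m)=\lfloor\frac{m}{4}\lfloor\frac{m-1}{3}\rfloor\rfloor$ then matches the claimed gaps; I checked your representative case $m=12s+7$ (where $U(m)=12s^2+13s+3$ and $D_1=12s^2+13s+2$) as well as $m\equiv 10\pmod{12}$ and several zero-gap classes, and the arithmetic is right. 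The six exceptional values are handled exactly as you say, using the explicit packings listed in the paper ($D_1=2,3,5,6,20,25$ for $m=8,9,10,11,17,19$, giving gaps $2,1,2,2,1,3$). The only thing to keep straight is that your argument proves the lemma \emph{conditional on} Lemma 4.1 and on the exceptional values being the true optima; those facts are themselves the nontrivial content of \cite{Br,St}, so your proof does not make the citation redundant, but as a consistency derivation it is complete and error-free.
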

By Lemma \ref{le4.2}, $D_1(8,4,2)=2$, $D_1(9,4,2)=3$, $D_1(10,4,2)=5$, $D_1(11,4,2)=6$, $D_1(17,4,2)=20$ and $D_1(19,4,2)=25$.
An optimal $2-(8,4,1)$ packing $H_1$ can be defined by $(V(H_1), E(H_1))$, where $V(H_1)=[8]$ and $E(H_1)=\{\{1,2,3,4\},\{5,6,7,8\}\}$.
An optimal $2-(9,4,1)$ packing $H_2$ can be defined by $(V(H_2), E(H_2))$, where $V(H_2)=[9]$ and $E(H_2)=\{\{1,2,3,4\},\{1,5,6,7\},\{3,6,8,9\}\}$.
An optimal $2-(10,4,1)$ packing $H_3$ can be defined by $(V(H_3), E(H_3))$, where $V(H_3)=[10]$ and $E(H_3)=\{\{1,2,3,4\},\{2,5,6,7\},\{1,5,9,10\},\{3,7,\linebreak 8,10\},\{4,6,8,9\}\}$. An optimal $2-(11,4,1)$ packing $H_4$ can be defined by $(V(H_4), E(H_4))$, where $V(H_4)=[11]$ and $E(H_4)=\{\{1,2,3,4\},\{1,5,6,7\},
\{1,8,9,10\},\{2,5,8,11\},\{3,6,9,11\},\linebreak \{4,7,10,11\}\}$.
For $m=17$, an optimal $2-(m,4,1)$ packing can be obtained from $S(2,4,16)$ by adding an isolated vertex.
For $m=19$, Stinson \cite{St} gave an optimal $2-(m,4,1)$ packing $H=\{V(H),E(H)\}$, where
$V(H)=[19]$ and $E(H)=\{\{1,2,3,4\},\{1,5,6,10\},\{2,5,7,17\},\{3,6,8,\linebreak 18\},\{4,7,9,18\},\{5,8,9,11\},\{1,7,11,12\},
\{1,8,13, 14\},\{1,9,15,16\},\{2,6,11,15\},\{2,8,12,16\linebreak \},\{3,5,13,19\},\{3,7,14,15\},\{3,9,10,12\},
\{4,5,14,16\},\{4,6,12,19\},\{4,8,15,17\},\{6,7,13,16\linebreak \}, \{6,9,14,17\},\{7,8,10,19\},\{1,17,18,19\},\{2,10,14,18\},\{3,11,16,17\},\{4,10,11,13\},\{5,12,\linebreak 15,18\}\}$.

Then we give a lower bound on $g(n,k)$ for large enough $n$.
\begin{lemma}\label{lem4.1}
$g(n,k)\ge (k-1)\lfloor\frac{n-k+1}{3}\rfloor+\frac{\binom{k-1}{2}}{6}-\frac{7}{2}-\frac{k+2}{6}$ for $n\ge 4k-4$.
\end{lemma}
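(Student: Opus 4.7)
The plan is to construct an explicit linear quadruple system $H$ on $n$ vertices in which many quadruples meet a designated $(k-1)$-set $S=\{v_1,\dots,v_{k-1}\}$, and to lower-bound $g(n,k)$ by that count. Writing $T=V(H)\setminus S$, so that $|T|=n-k+1\ge 3(k-1)$ by the hypothesis $n\ge 4k-4$, the system $H$ will be built in two layers: quadruples entirely inside $S$ coming from an optimal $2$-$(k-1,4,1)$ packing on $S$, and quadruples meeting $S$ in exactly one vertex coming from $k-1$ linear $4$-stars, the $i$-th centred at $v_i$ with its three leaves in $T$.

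For the $S$-internal layer we apply Lemma~\ref{le4.2} with $m=k-1$ to obtain $D_1(k-1,4,2)\ge\lfloor\tfrac{k-1}{4}\lfloor\tfrac{k-2}{3}\rfloor\rfloor-3$. Writing $k-2=3q+r$ with $r\in\{0,1,2\}$ and expanding, this reduces to $D_1(k-1,4,2)\ge\tfrac{(k-1)(k-2)}{12}-\tfrac{(k-1)r}{12}-3\ge\tfrac{\binom{k-1}{2}}{6}-\tfrac{k+2}{6}-\tfrac{7}{2}$, which is exactly the constant part of the claimed bound.

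For the star layer we want each $v_i$ to receive $\lfloor(n-k+1)/3\rfloor$ quadruples of the form $\{v_i\}\cup\Delta$ with $\Delta\subset T$. Linearity of $H$ forces the $k-1$ families of leaf triples to be pairwise pair-disjoint on $T$; in other words, they should form $k-1$ (near-)parallel classes of a $2$-$(|T|,3,1)$ packing. When $|T|\equiv 3\pmod 6$ a Kirkman triple system on $T$ supplies $(|T|-1)/2\ge k-1$ such classes for free. For the remaining residues modulo $6$ we invoke standard near-Kirkman and resolvable-packing constructions from design theory; since $|T|\ge 3(k-1)$, in every residue class one can extract $k-1$ pairwise pair-disjoint triple classes each of the maximum size $\lfloor|T|/3\rfloor$. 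Attaching $v_i$ to each triple of the $i$-th class produces $(k-1)\lfloor(n-k+1)/3\rfloor$ star quadruples, and together with the first layer yields $g(n,k)\ge D_1(k-1,4,2)+(k-1)\lfloor(n-k+1)/3\rfloor$, which combined with the estimate above gives the stated bound.

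The main obstacle is the star layer: producing, uniformly in the residue of $|T|\pmod 6$, $k-1$ pairwise pair-disjoint parallel classes of triples on $T$ of the maximum size $\lfloor|T|/3\rfloor$, without any extra residue-dependent loss that could leak into the constant $\tfrac{7}{2}+\tfrac{k+2}{6}$. This is exactly the place where the hypothesis $n\ge 4k-4$ is used, in order to guarantee enough room for the resolvable construction. Once this is settled, the remainder is just the floor arithmetic on $\lfloor(k-2)/3\rfloor$ already carried out in the second paragraph, plus the addition of the two layers.
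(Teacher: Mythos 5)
Your construction is essentially the paper's: an optimal $2$-$(k-1,4,1)$ packing inside the distinguished $(k-1)$-set, plus $k-1$ pairwise pair-disjoint (near-)parallel classes of triples on the remaining $n-k+1\ge 3(k-1)$ vertices, each attached to one $v_i$; the only cosmetic difference is that you extract the constant term from the numerical formula of Lemma~\ref{le4.2}, whereas the paper reads off the number of uncovered pairs from the leave structures, and both give the same bound. The resolvability step you flag as the main obstacle is asserted with the same brevity in the paper itself, so your proposal matches the intended proof.
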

\begin{proof}
Let $A$ be a fixed $(k-1)$-element subset of $n$ vertices. Then we can find an optimal $2-(n,4,1)$ packing on $A$. By above argument about Steiner system $S(2,4,n)$ and optimal $2-(n,4,1)$
packing, we know this leaves $0, k-1, \frac{k-1}{2}, \frac{k+2}{2}, 9, k+2$ pairs of vertices in $A$ uncovered for
$k-1\notin \{8,9,10,11,17,19\}$. And for $k-1\in \{8,9,10,11,17,19\}$ this leaves $15, 16, 19, 21$ pairs of vertices in $A$ uncovered.
Since $n-k+1\ge 3k-3$, we can extend the vertices of $A$ into quadruples using $k-1$ disjoint perfect 3-matchings of linear triple systems on the $n-k+1$ vertices outside $A$. Thus we have at least
$$(k-1)\left\lfloor\frac{n-k+1}{3}\right\rfloor+\frac{\binom{k-1}{2}-max\{21,k-1,\frac{k-1}{2},\frac{k+2}
{2},k+2\}}{\binom{4}{2}}$$
quadruples, proving the lemma.
\end{proof}

In order to characterize the function $g(n,k)$, we give a simple upper bound on $g(n,k)$ as well.
\begin{lemma}
$g(n,k)\le (k-1)\lfloor\frac{n-k+1}{3}\rfloor+\frac{\binom{k-1}{2}}{2}$.
\end{lemma}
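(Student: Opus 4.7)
The plan is to classify every quadruple of $H$ by the size $j=|e\cap A|\in\{1,2,3,4\}$ of its intersection with the fixed $(k-1)$-set $A$, write $t_j$ for the number of quadruples with $|e\cap A|=j$, and bound $g(n,k)\le t_1+t_2+t_3+t_4$ by treating $t_1$ and $t_2+t_3+t_4$ separately.

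For the bound on $t_1$, fix $v\in A$ and look at the type-$1$ quadruples through $v$. By linearity any two of them meet only in $v$, so the three-element pieces $e\setminus\{v\}$ lying in $V\setminus A$ are pairwise disjoint subsets of a set of size $n-k+1$. Hence the number of type-$1$ quadruples through $v$ is at most $\lfloor(n-k+1)/3\rfloor$. Since each type-$1$ quadruple is counted exactly once when we sum over the $k-1$ vertices of $A$, this yields
\[
t_1 \le (k-1)\left\lfloor\frac{n-k+1}{3}\right\rfloor.
\]

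For the bound on $t_2+t_3+t_4$, the key fact is that each of the $\binom{k-1}{2}$ pairs of vertices inside $A$ lies in at most one quadruple of $H$. A quadruple of type $j\ge 2$ contributes $\binom{j}{2}$ such pairs, giving the pair-counting inequality
\[
t_2+3t_3+6t_4 \;\le\; \binom{k-1}{2}.
\]
I couple this with the per-vertex degree inequality $d_v^{(\ge 2)}\le k-2$ for every $v\in A$ (which holds because the type-$\ge 2$ quadruples at $v$ each consume a distinct element of $A\setminus\{v\}$ by linearity); summed over $v\in A$, it reads $2t_2+3t_3+4t_4\le(k-1)(k-2)=2\binom{k-1}{2}$. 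A short weighted combination of the two inequalities then delivers
\[
t_2+t_3+t_4 \;\le\; \frac{\binom{k-1}{2}}{2},
\]
and adding this bound to the one on $t_1$ completes the proof of the lemma.

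The main delicate point is the derivation of the factor $\tfrac12$: pair-counting alone would produce only $t_2+t_3+t_4\le\binom{k-1}{2}$, since type-$2$ quadruples cost merely one pair each, so sharpening this requires blending the pair-constraint with the per-vertex degree constraint coming from linearity at the $A$-side. The argument for $t_1$ is immediate from disjointness of the outside triples, and summing the two bounds is routine.
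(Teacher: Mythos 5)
Your bound $t_1\le(k-1)\lfloor\frac{n-k+1}{3}\rfloor$ and your two inequalities $t_2+3t_3+6t_4\le\binom{k-1}{2}$ and $2t_2+3t_3+4t_4\le 2\binom{k-1}{2}$ are all correct, but the step ``a short weighted combination of the two inequalities then delivers $t_2+t_3+t_4\le\frac12\binom{k-1}{2}$'' is a genuine gap: that intermediate inequality is simply false. Take $t_3=t_4=0$ and $t_2=\binom{k-1}{2}$; this satisfies both of your inequalities and is realizable by a linear quadruple system (through each pair of $A$ put one quadruple whose two outside vertices are new each time), yet $t_2+t_3+t_4=\binom{k-1}{2}$, twice your claimed bound. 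Formally, any combination $\alpha\cdot(\mathrm{I})+\beta\cdot(\mathrm{II})$ with $\alpha,\beta\ge0$ has $t_2$-coefficient $\alpha+2\beta$ on the left and right-hand side $(\alpha+2\beta)\binom{k-1}{2}$, so it can never certify anything better than $t_2\le\binom{k-1}{2}$; no choice of weights rescues the claim.

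The missing idea is that $t_1$ and $t_2$ cannot be bounded separately: the type-$2$ quadruples through a vertex $v\in A$ consume outside vertices and thereby shrink the room for type-$1$ quadruples through $v$. This coupling is exactly how the paper argues. Writing $d_v$ for the number of type-$2$ quadruples through $v$, linearity makes the outside parts of all quadruples through $v$ pairwise disjoint, so the number of type-$1$ quadruples through $v$ is at most $\frac{n-k+1-2d_v}{3}$, whence
\[
t_1+t_2\;\le\;\sum_{v\in A}\frac{n-k+1-2d_v}{3}+\sum_{v\in A}\frac{d_v}{2}\;=\;(k-1)\,\frac{n-k+1}{3}-\sum_{v\in A}\frac{d_v}{6},
\]
and the entire additive term $\frac12\binom{k-1}{2}$ is then supplied by $t_3+t_4\le\frac13\binom{k-1}{2}+\frac16\binom{k-1}{2}$ from pair-counting alone (a type-$3$ quadruple covers $3$ pairs of $A$ and a type-$4$ quadruple covers $6$). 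The saving of $\frac{2d_v}{3}$ in the $t_1$ count strictly exceeds the contribution $\frac{d_v}{2}$ of $v$ to $t_2$, which is the point your decomposition loses; as it stands your argument only yields the weaker bound with $\binom{k-1}{2}$ in place of $\frac12\binom{k-1}{2}$.
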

\begin{proof}
Let $A$ be a fixed $(k-1)$-element subset of vertices in a linear quadruple system $H$ on $n$ vertices such that
all quadruples of $H$ intersect $A$. For $j\in \{1,2,3,4\}$, we use $e_j$ to denote the number of edges intersecting $A$ in $j$ vertices.
Note that the quadruples intersecting $A$ in two vertices define a graph with vertex set $A$ and degree sequence $d_i$ for $i\in [k-1]$.
Firstly, by the definition of Steiner triple system $STS(n)$ and Steiner system $S(2,4,n)$, we have $e_3\le \frac{\binom{k-1}{2}}{3}$ and
$e_4\le \frac{\binom{k-1}{2}}{6}$. It follows from $H$ is linear that $e_1\le \sum_{i=1}^{k-1} \frac{n-k+1-2d_i}{3}$ and $e_2=\sum_{i=1}^{k-1} \frac{d_i}{2}$.
Hence, we have
$$e_1+e_2+e_3+e_4\le (k-1)\left\lfloor\frac{n-k+1}{3}\right\rfloor+\frac{\binom{k-1}{2}}{2}-\sum_{i=1}^{k-1} \frac{d_i}{6},$$
proving the lemma.
\end{proof}

We prove Theorem \ref{th1.4} by induction on $k$. For $k=1$, the result is trivial. For $k=2$, the above statement shows that it also holds.
Let $k\ge 3$. Assume that $H$ is an $M_k$-free linear quadruple system on $n$ vertices such that
$|E(H)|>g(n,k)$ and $n>37(k-1)^2+3$. By the inductive hypothesis $H$ contains $M_{k-1}$ with quadruples $X_i=\{a_i,b_i,c_i,d_i\}$ where $i\in [k-1]$.
Similarly, we use $E_2$ to denote the set of quadruples in $H$ intersecting $V(M_{k-1})$ in at least two vertices.
Note that $|E_2|\le \binom{4(k-1)}{2}\}$. When $k=3$, we have $E_2\le 18$ since any quadruple in $E_2$ intersecting $V(M_2)$ in exactly two vertices or four vertices. Since $H$ is $M_k$-free, the set $E_1$ of quadruples in $H$ not in $E_2$ must intersect $V(M_{k-1})$ in exactly one vertex.

Similar with Gy\'{a}rf\'{a}s et al's notion, we call a quadruple $\{a_i,b_i,c_i,d_i\}$ \textit{good} if one of its vertex, say $a_i$, has degree larger than $4(k-1)$ in $E_1$,
otherwise we call it \textit{bad}. Without loss of generality, we assume $M_{k-1}$ contain $j$ good quadruples $X_1, X_2, \cdots, X_j$ with
vertices $a_1, a_2, \cdots, a_j$ of degree larger than $4(k-1)$ in $E_1$, where $0\le j\le k-1$. Note that for a fixed $1\le i\le k-1$, if a vertex in $X_i$
has degree at least four in $E_1$, then the other three vertices of $X_i$ have degree zero in $E_1$. Otherwise we can find an $M_k$ since the quadruple $X_i$
can be replaced by two disjoint quadruples of $E_1$. It follows that the number of quadruples in $E_1$ intersecting a good quadruple $X_i$ equals to the degree
of $a_i$ in $E_1$ and the number of quadruples in $E_1$ intersecting a bad quadruple $X_i$ is at most $4(k-1)$ for $k\ge 4$(12 for $k=3$). Combining all above analysis,
we have
\begin{equation}\label{ineq3}
|E(H)|=|E_2|+|E_1|\le
\begin{cases}
18+j\left\lfloor\frac{n-8}{3}\right\rfloor+12(2-j), &\text{for }k=3;\\
\binom{4(k-1)}{2}+j\left\lfloor\frac{n-4(k-1)}{3}\right\rfloor+4(k-1)(k-1-j), &\text{for }k\ge 4.\\
\end{cases}
\end{equation}
We claim that for $j<k-1$, the inequality \ref{ineq3} contradicts the assumption $E(H)>g(n,k)$.
It is enough for us to check that the right hand side of \ref{ineq3} is smaller than the lower bound of $g(n,k)$
in Lemma \ref{lem4.1}. When $k=3$, it is easy for us to check that above statement hold. When $k\ge 4$, in order to make the above statement hold, by rewriting the second term of \ref{ineq3} as
$j\lfloor\frac{n-k+1}{3}\rfloor-j(k-1)$ and rearranging
we need that
\begin{equation}\label{ineq4}
\binom{4(k-1)}{2}-\frac{k^2-5k-44}{12}-j(k-1)+4(k-1)(k-1-j)<(k-1-j)\left\lfloor\frac{n-k+1}{3}\right\rfloor.
\end{equation}
For \ref{ineq4}, replacing $\lfloor\frac{n-k+1}{3}\rfloor$ by the smaller $\frac{n-k-2}{3}$, rearranging and multiplying by 3 we have
\begin{equation}\label{ineq5}
24(k-1)^2-6(k-1)-\frac{k^2-5k-44}{4}-3j(k-1)+(k-1-j)(13(k-1)+3)<(k-1-j)n.
\end{equation}
The last term on the left hand side of \ref{ineq5} is largest when $j=0$ thus it is enough for us to prove that
\begin{equation}\label{ineq6}
24(k-1)^2-3(k-1)-\frac{k^2-5k-44}{4}-3j(k-1)+13(k-1)^2<(k-1-j)n.
\end{equation}
And since the sum of the three terms with a negative sign on the left hand side of \ref{ineq6} is less than three for $k\ge 4$, we have
$$37(k-1)^2+3<(k-1-j)n,$$
which is true by the assumption $n>37(k-1)^2+3$.

If $j=k-1$, then all quadruples $X_i$ are good for $1\le i\le k-1$. We claim that the vertex subset $A=\{a_1,a_2,\cdots, a_{k-1}\}$
intersect all quadruples in $H$. Otherwise there exists a quadruple $B$ such that $V(B)\cap A=\emptyset$.
Since the degrees of the vertices $a_i$ are larger than $4(k-1)$, by the greedy algorithm we can find $k-1$ pairwise disjoint quadruples
that are disjoint with $B$ as well, a contradiction. We conclude that $A$ intersect all quadruples in $H$ implying that $|E(H)|\le g(n,k)$. Thus, the
theorem holds. \qed

\section*{Appendix}
\appendix
Configurations $S(2,4,13)$ and $S(2,4,16)$ are as follows. $S(2,4,13)=(V(S(2,4,13)), E(S(2,4,\linebreak 13)))$ where $V(S(2,4,13))=[13]$ and
$E(S(2,4,13))=\{\{1,2,3,4\},\{1,5,6,7\},\{1,8,9,10\},\{1,\linebreak 11,12,13\},\{2,5,9,13\},\{2,6,10,11\},\{2,7,8,12\},\{3,5,10,12\},\{3,6,8,13\},\{3,7,9,11\},\{4,5,\linebreak 8,11\},\{4,6,9,12\},\{4,7,10,13\}\}$.
$S(2,4,16)=(V(S(2,4,16)), E(S(2,4,16)))$ where $V(S(2,4,\linebreak 16))=[16]$ and
$E(S(2,4,16))=\{\{1,2,3,4\},\{1,5,6,7\},\{1,8,9,10\},\{1,11,12,13\},\{1,14,15,\linebreak 16\},\{2,5,9 ,13\},\{2,8,12,16\},\{2,11,15,7\},\{2,14,6,10\},\{3,6,8,13\},\{3,9,11,16\},\{3,12,14,\linebreak 7\},\{3,15,5,10\},\{4,7,8,15\},\{4,10,11,6\},
\{4,13,14,9\},\{4,16,5,12\},\{5,8,11,14\},\{6,9,12,\linebreak 15\},\{7,10,13,16\}\}$.

\begin{figure}[htbp]
\centering
\subfigure
{
\begin{minipage}{7cm}
\centering
\includegraphics[scale=0.45]{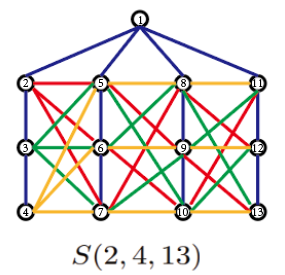}
\end{minipage}
}
\subfigure
{
\begin{minipage}{7cm}
\centering
\includegraphics[scale=0.45]{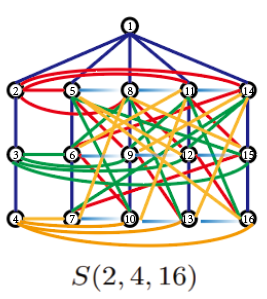}
\end{minipage}
}
\caption{Configurations $S(2,4,13)$ and $S(2,4,16)$}\label{fig1}
\end{figure}

\section*{Declaration of competing interest}
The authors declare that they have no conflict of interest.

\section*{Acknowledgement}
We wish to thank the anonymous referee for his or her valuable suggestions that improved the presentation of this paper.


\begin{thebibliography}{99}

\bibitem{Br} A.E. Brouwer, Optimal packings of $K_4$'s into a $K_n$, J. Combin. Theory Ser. A 26 (1979) 278--297.

\bibitem{BES} W.G. Brown, P. Erd\H{o}s, V. S\'{o}s, On the existence of triangulated spheres in 3-graphs and related problems, Period. Math. Hungar. 3 (1973) 221--228.

\bibitem{CFGGWY} A. Carbonero, W. Fletchcher, J. Guo, A. Gy\'{a}rf\'{a}s, R. Wang, S. Yan, Crowns in linear 3-graphs, arXiv:2107.14713v1.

\bibitem{CR} C.J. Colbourn, A. Rosa, Triple Systems, Oxford Mathematical Monographs, Calendron Press, Oxford, 1999.

\bibitem{CC} C. Collier-Cartaino, N. Graber, T. Jiang, Linear Tur\'{a}n numbers of $r$-uniform linear cycles and related Ramsey numbers, Combin. Probab. Comput. 27 (2018) 358--386.

\bibitem{Er} P. Erd\H{o}s, A problem on independent $r$-tuples, Annales Univ. Sci. Budapest E\"{o}tv\"{o}s Sect. Math. 8 (1965) 92--95.

\bibitem{EG} P. Erd\H{o}s, T. Gallai, On the maximal paths and cricuits of graphs, Acta Math. Acad. Sci. Hung. 10 (1959) 337--357.

\bibitem{EGM} B. Ergemlidze, E. Gy\H{o}ri, A. Methuku, Asymptotics for the Tur\'{a}n number of cycles in 3-uniform linear hypergraphs, J. Combin. Theory Ser. A 163 (2019) 163--181.

\bibitem{WF} W. Fletchcher, Improved upper bound on the linear Tur\'{a}n number of the crown, arXiv:2109.02729v1.

\bibitem{Fu} Z. F\H{u}redi, Tu\'{a}n type problems, Surveys in Combinatorics, London Math. Soc. Lecture Notes Ser., 166, Cambridge Univ. Press, Cambridge, 1991, 253--300.

\bibitem{FG} Z. F\H{u}redi, A. Gy\'{a}rf\'{a}s, An extention of Mantel's theorem to $k$-graphs, Amer. Math. Monthly 127 (2020) 263--268.

\bibitem{FS} Z. F\H{u}redi, M. Simonovits, The history of degenerate (bipartite) extremal graph problems, Erd\H{o}s centennial, 169--294, Boyal Soc. Math. Stud., 25, J\'{a}nos Bolyai Math. Soc., Budapest, 2013.

\bibitem{GC} G. Gao, A. Chang, A linear hypergraph extension of the bipartite Tur\'{a}n problem, European J. Combin. 93 (2021) 103269.

\bibitem{GMV} D. Gerbner, A. Methuku, M. Vizer, Asymptotics for the Tur\'{a}n number of Berge-$K_{2,t}$, J. Combin. Theory Ser. B 137 (2019) 264--290.

\bibitem{GP} D. Gerbner, B. Patk\'{o}s, Extremal Finite Set Theory, CRC press, Taylor and Francis group, 2019.

\bibitem{GS} A. Gy\'{a}rf\'{a}s, G.N. S\'{a}rk\"{o}zy, Tur\'{a}n and ramsey numbers in linear triple systems, Discrete Math. 344 (2021) 112258.

\bibitem{GRS} A. Gy\'{a}rf\'{a}s, M. Ruszink\'{o}, G.N. S\'{a}rk\"{o}zy, Linear Tur\'{a}n numbers of acyclic triple systems, European J. Combin. 99 (2022) 103435.

\bibitem{Ha} H. Hanani, The existence and construction of balanced incomplete block designs, Ann. Math. Stat. 32 (1961) 361--386.

\bibitem{Kee} P. Keevash, Hypergraph Tu\'{a}n problems, Surveys in Combinatorics, London Math. Soc. Lecture Notes Ser., 392, Cambridge Univ. Press, Cambridge, 2011, 83--139.

\bibitem{LV} F. Lazebnik, J. Verstra\"{e}te, On hypergraphs of girth five, Electron. J. Combin. 10 (2003) \#R25.

\bibitem{MV} D. Mubayi, J. Verstra\"{e}te, A survey of Tur\'{a}n problems for expansions, Recent Trends in Combinatorics, IMA Math. Appl., 159, Springer, 2016, 117--143.

\bibitem{RR} C. Reid, A. Rosa, Steiner systems $S(2,4,v)$-a survey, Electron. J. Combin. (1994)\#DS18, Dynamic Surveys.

\bibitem{RS} I.Z. Ruzsa, E. Szemer\'{e}di, Triple systems with no six points carrying three triangles, Combinatorics, II, Colloq. Math. Sco. J. Bolyai, North-Holland, 18 (1978) 939--945.

\bibitem{St} D.R. Stinson, Determination of a packing number, Ars Combin. 3 (1977) 89--114.

\bibitem{TWZZ} C. Tang, H. Wu, S. Zhang, Z. Zheng, On the Tur\'{a}n number of the linear 3-graph $C_{13}$, Electron. J. Combin. 29 (3)(2022)\#P3.46.

\bibitem{SZ} E. Szemer\'{e}di, Regular partitions of graphs, Probl\.{e}mes combinatoires et th\'{e}orie des graphes, Colloq. Internat. CNRS, Univ. Orsay, 1976, 399--401.

\bibitem{CT} C. Timmons, On $r$-uniform linear hypergraphs with no Berge-$K_{2,t}$, Electron. J. Combin. 244 (2017) 4--34.

\end{thebibliography}
\end{document}